\newtheorem{theorem}{Theorem}[section]
\newtheorem{corollary}[theorem]{Corollary}
\newtheorem{lemma}[theorem]{Lemma}
\newtheorem{remark}[theorem]{Remark}
\newtheorem{definition}[theorem]{Definition}
\newtheorem{example}[theorem]{Example}
\DeclareRobustCommand{\qed}{%
  \ifmmode 
  \else \leavevmode\unskip\penalty9999 \hbox{}\nobreak\hfill
  \fi
  \quad\hbox{\qedsymbol}}
\newcommand{\openbox}{\leavevmode
  \hbox to.77778em{%
  \hfil\vrule
  \vbox to.675em{\hrule width.6em\vfil\hrule}%
  \vrule\hfil}}
\newcommand{\qedsymbol}{\openbox}
\newenvironment{proof}[1][\proofname]{\par
  \normalfont
  \topsep6\p@\@plus6\p@ \trivlist
  \item[\hskip\labelsep\itshape
    #1\@addpunct{.}]\ignorespaces
}{%
  \qed\endtrivlist
}
\newcommand{\proofname}{Proof}
\newcommand{\C}{\mathbb{C}}
\newcommand{\R}{\mathbb{R}}
\newcommand{\Z}{\mathbb{Z}}
\renewcommand{\tilde}{\widetilde}
\renewcommand{\setminus}{\smallsetminus}
\newcommand{\Ind}{\mathrm{Ind}\,}
\newcommand{\fiber}{\mathrm{fiber}}
\newcommand{\Ker}{\mathrm{Ker}\,}
\title{Torus fibrations and localization of index I \\ 
{\large  --  Polarization and acyclic fibrations
}}
\author{Hajime Fujita, 
Mikio Furuta\thanks{Partially supported by JSPS 
Grant 19340015, 19204003 and 16340015.} 
 and Takahiko Yoshida\thanks{Partially supported by Advanced Graduate Program in Mathematical Sciences at Meiji University, JSPS Grant-in-Aid for Young Scientists (B) 20740029, and Fujyukai Foundation.}}
\date{}
\begin{document}

\maketitle

\begin{abstract}
We define a local Riemann-Roch number for
an open symplectic manifold when
a complete integrable system without Bohr-Sommerfeld
fiber is provided on its end. In particular when a structure of a singular Lagrangian fibration is given on a closed symplectic manifold, its Riemann-Roch number is described as the sum of the number of nonsingular Bohr-Sommerfeld fibers and a contribution of the singular fibers. A key step of the proof is formally explained as a version of Witten's deformation applied to a Hilbert bundle. 
\end{abstract}

\noindent {\it Key words and phrases}. 
Geometric quantization,
index theory, localization.
\vspace{3mm}\\
\noindent {\it 2000 AMS Mathematics 
Subject Classification}. Primary 53D50; Secondary 58J20.


\section{Introduction}

The purpose of this paper is to give a 
localization technique
for the index of spin$^c$ Dirac operator.
Our localization 
makes use of no group action, but a family of 
acyclic flat connections on tori.
A typical example is given by a symplectic manifold
with an integrable system.

For a complete integrable system on a 
closed symplectic
manifold, the Riemann-Roch number is 
sometimes equal to 
the number of Bohr-Sommerfeld fibers. 
For K\"ahler case D. Borthwick, T. Paul and A. Uribe
gave a relationship between
Bohr-Sommerfeld Lagrangians and
the kernels of twisted spin$^{c}$ Dirac operators
using  Fourier integral operators~\cite{BPU}.
A problem here would be how
to count singular Bohr-Sommerfeld fibers, and
how to count the contribution from other 
singular fibers if there are.
M. D. Hamilton and E. Miranda dealt with 
the singular Bohr-Sommerfeld fibers using 
J. \'Sniatycki's framework and found a discrepancy 
between quantizations via real and complex polarizations 
under  this framework~\cite{Hamilton, Hamilton-Miranda}.

When an integrable system is associated to a
Hamiltonian group action, 
the geometric quantization conjecture
of V.~Guillemin and S.~Sternberg is a localization property of
the Riemann-Roch character.
H. Duistermaat, V. Guillemin, E. Meinrenken 
and S. Wu gave a proof of the conjecture 
for torus actions using the geometric localization
provided by E. Lerman's symplectic cuts
\cite{DGMW} \cite{Lerman}.
Y.~Tian and W.~Zhang gave a proof using
a direct analytic localization 
with perturbation of the Dirac operator \cite{TianZhang}.

In this paper we give a localization property
of the Riemann-Roch numbers 
for a complete integrable system possibly with singular fibers.
We define multiplicities 
for Bohr-Sommerfeld fibers and singular fibers 
so that the total sum of the multiplicities 
for all the  fibers
is exactly equal to the Riemann-Roch number.
Our method is flexible and allows various generalizations.
In this paper, however, we explain only the simplest case.
In our subsequent paper we will explain 
some of the generalizations and, as an application, an approach to
the V.~Guillemin-S.~Sternberg conjecture for
Hamiltonian torus actions \cite{FFY,FFY3}.

Our idea is simple.
Let $M$ be a $2n$-dimensional closed symplectic manifold
with  a prequantizing line bundle $L$.
Suppose $X$ is an $n$-dimensional affine space, 
and $\pi:M \to X$ a completely integrable system.
The symplectic structure $\omega$ gives rise to
an element $[D\otimes L]$ of the K-homology group $K_0(M)$,
where $D$ is the Dolbeault operator for an almost complex
structure compatible with $\omega$.
The projection $\pi$ gives an element of $K_0(X)$
defined to be the pushforward $\pi_* [D\otimes L]$.
The Riemann-Roch number is 
calculated as the further pushforward of $\pi_*[D \otimes L]$
with respect to the map from $X$ to a point.

If we have a formulation of K-homology group
in terms of some geometric data, and if
a geometric representative of $\pi_*[D \otimes L]$
has a {\it localized  support}, then
the Riemann-Roch number can be calculated by
the data on the support.

In fact it is possible to realize this
idea rigorously if we use a formulation of K-homology
in terms of some notion of generalized vector
bundle with Clifford module bundle action,
which is developed in \cite{index 2}, \cite{Gomi}
\footnote{Strictly speaking what we actually need 
here is the notion of a 
{\it K-cohomology cocycle
with local coefficient} which is defined using some action of a 
Clifford algebra bundle.
For a manifold if the Clifford algebra
module is the one generated by its tangent space, 
the twisted K-cohomology group is identified with its
K-homology through a duality.}.  
In this short paper, however,  we explain only the
localization property 
without appealing to the framework of \cite{index 2}
at the expense of giving up 
identifying $\pi_*[D \otimes L]$
in terms of geometric data.

Some key points of our arguments are:
\begin{enumerate}
\item
The restriction of 
the Dolbeault operator, or the spin${}^c$ Dirac operator,
to a Lagrangian submanifold is equal to
its de Rham operator
at least on the level of a principal symbol.
\item
A flat line bundle over a torus is trivial if and only if
the corresponding twisted de Rham complex is not acyclic.
\item
The localization we use is given by an adiabatic limit.
\item
The Laplacian corresponding to the de Rham operator along
Lagrangian fibers plays the role of the potential term
of the spin${}^c$ Dirac operator on the base manifold.
\item
Since the Laplacian is a second-order elliptic differential
operator,
when it is strictly positive, it can absorb
the effect of first-order term.
\end{enumerate}
The last property makes our construction 
rather flexible.

The organization of this paper is as follows.
In Sections 2, 3 and 4, 
we give localization properties
for symplectic setting, topological setting,
and analytical setting respectively.
The latter is a generalization  of the former
for each stage.
In  Section 5 we prove the localization.
In Section 6 we give an example for $2$-dimensional
topological case.
Finally in Section 7 we give comments for
possible generalizations.

\section{Symplectic formulation}
Let $M$ be a $2n$-dimensional
closed symplectic manifold with 
a symplectic form $\omega$. 
Suppose $L$ is a complex Hermitian line bundle over $M$
 with Hermitian connection $\nabla$ satisfying
that the curvature of $\nabla$ is equal to $-2\pi\sqrt{-1} \omega$. 
$M$ has an almost complex structure compatible with $\omega$,
and we can define the Riemann-Roch number $RR(M, L)$
as the index of the Dolbeault operator with coefficients in $L$.

Let $X$ be an $n$-dimensional affine space,
and $\pi :M \to X$ a complete integrable system.
Then generic fibers of $\pi$ are empty or disjoint unions of 
finitely many $n$-dimensional tori
with canonical affine structures.

\begin{definition}
$x \in X$ is {\it $L$-acyclic} if $x$ is a regular value
of $\pi$ and $L$ does not have any non-vanishing parallel section
over the fiber at $x$.
\end{definition}



The main purpose of this paper is to show that
$RR(M,L)$ is localized at non $L$-acyclic points.
More precisely
\begin{theorem} \label{symplectic}
Suppose $X=U_{\infty} \cup (\cup_{i=1}^m U_i)$ 
is an open covering satisfying the following properties.
\begin{enumerate}
\item
$\{U_i\}_{i=1}^m$ are mutually disjoint.
\item
$U_\infty$ consists of $L$-acyclic points.
\end{enumerate}
Let $V_i=\pi^{-1}(U_i)$.
Then for each $i=1,\ldots,m$ there exists an
integer $RR(V_i, L)$, which depends only
on the data restricted on ${V_i}$, such that
$$
RR(M,L) = \sum_{i=1}^m RR(V_i,L).
$$
Here the integer $RR(V_i,L)$ is invariant under 
continuous deformations of the data. 
\end{theorem}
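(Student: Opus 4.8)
The plan is to realize $RR(M,L)$ as an index of a (spin$^c$) Dirac operator and to deform this operator, in the spirit of Witten and of Tian--Zhang, by adding a large potential term supported away from the non $L$-acyclic fibers, so that the index localizes to neighborhoods of those fibers. First I would fix an $\omega$-compatible almost complex structure and work with the Dolbeault operator $D\otimes L$, whose index is $RR(M,L)$. The key geometric input is point (1) of the introduction: along a Lagrangian fiber the principal symbol of $D\otimes L$ agrees with that of the de Rham operator twisted by $L|_{\mathrm{fiber}}$. Over the $L$-acyclic region $U_\infty$, each fiber is a disjoint union of tori carrying a flat connection with no parallel section, so by point (2) the twisted de Rham complex on each fiber is acyclic; the associated fiberwise Laplacian $\Delta_{\mathrm{fiber}}$ is therefore strictly positive, with a positive lower bound on any region with compact closure in $U_\infty$.

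Next I would set up the adiabatic/Witten deformation. Choose a partition of unity subordinate to $U_\infty, U_1,\dots,U_m$ and a function $\rho$ on $X$ which vanishes on neighborhoods of the non $L$-acyclic points and is positive on a region exhausting $U_\infty$. Pull $\rho$ back to $M$ and consider the family of operators $D_t = D\otimes L + t\,\rho\cdot(\text{fiberwise de Rham operator})$, or more precisely rescale the metric in the fiber directions by $t^{-1}$ so that $\Delta_{\mathrm{fiber}}$ enters as a potential of size $t^2\rho^2$. Point (4) is exactly the statement that this fiberwise Laplacian plays the role of the potential term on the base. Point (5) — that a strictly positive second-order operator dominates the first-order cross terms for large $t$ — is what makes the estimate work: a Bochner-type identity gives $D_t^* D_t \geq c\,t^2\rho^2 - C\,t$ on sections supported where $\rho > 0$, so for $t$ large the kernel (and cokernel) of $D_t$ concentrates in $\pi^{-1}(\{\rho = 0\})$, which we can arrange to lie inside $\bigsqcup_i V_i$. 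Since the $V_i$ are disjoint, the index splits as a sum of contributions, one per $V_i$, and $D_t$ has the same index as $D\otimes L$ because the deformation is by a bounded-below family (a homotopy of Fredholm operators). Defining $RR(V_i,L)$ to be the $i$-th summand and checking it depends only on data over $V_i$ — since the deformed operator is, near $V_i$, built only from the restricted geometry — gives the formula, and Fredholm-homotopy invariance gives the deformation-invariance of each $RR(V_i,L)$.

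The main obstacle I expect is making the localization estimate rigorous in the infinite-dimensional fiber directions: the ``potential'' $\Delta_{\mathrm{fiber}}$ is an unbounded operator on the Hilbert bundle of $L^2$-sections along the fibers, so one must control it uniformly, handle the transition between the tori-fibration region and the singular fibers, and ensure the Fredholm property of $D_t$ is preserved throughout the deformation (this is where the noncompactness introduced by passing to the open pieces $V_i$ bites). Concretely, one needs a uniform spectral gap for $\Delta_{\mathrm{fiber}}$ over compact subsets of $U_\infty$, together with an argument — presumably the ``version of Witten's deformation applied to a Hilbert bundle'' advertised in the abstract — that upgrades this fiberwise gap to a genuine lower bound for $D_t^*D_t$ on the relevant subspace of sections, uniformly in $t$. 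I would expect the detailed construction of $RR(V_i,L)$ as an honest index on the open manifold $V_i$ (rather than merely as a summand extracted from the closed manifold) to be deferred to the later sections and to rely on the framework of the cited companion work on generalized vector bundles with Clifford module action.
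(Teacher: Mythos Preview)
Your proposal captures the analytical heart of the paper's argument correctly: the deformation $D_t=\tilde D+tD_{\fiber}$ and the key estimate that, because the anticommutator $\tilde D D_{\fiber}+D_{\fiber}\tilde D$ is first order \emph{along the fibers}, it is dominated by $D_{\fiber}^2$, yielding $\|D_t f\|^2\ge\|\tilde D f\|^2+(t^2-Ct)\|D_{\fiber}f\|^2$. This is exactly the content of the paper's Lemma~5.2, and your identification of the obstacle (the ``potential'' is an unbounded operator on a Hilbert bundle) is on target.

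Where you diverge from the paper is in the \emph{definition} of $RR(V_i,L)$. You propose to keep working on the closed manifold $M$ with a cutoff $\rho$ and to extract the local contributions as summands once the kernel of $D_t$ concentrates near $\bigsqcup_i V_i$. The paper does not do this. Instead it defines $RR(V_i,L)$ intrinsically: one cuts off $V_i$ along a fibered hypersurface, attaches a cylindrical end with translationally invariant data, and takes the $L^2$-index (equivalently the APS index) of $D_t$ on the resulting complete manifold for large $t$ (Definitions~5.4 and~5.7). The vanishing lemma on closed manifolds is used only to show that the boundary operator $D_{N,t}$ is invertible, so the APS index is well defined and deformation invariant; the cylindrical version (Lemma~5.3) then gives a sum formula and an excision theorem, from which $RR(M,L)=\sum_i RR(V_i,L)$ follows formally. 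This buys the paper a clean statement that $RR(V_i,L)$ depends only on the restricted data---which in your concentration picture requires a separate argument---at the cost of the machinery of manifolds with cylindrical ends. Note also that the paper layers the reduction (symplectic $\to$ topological $\to$ analytical) so that the proof of Theorem~\ref{symplectic} itself is only a few lines translating symplectic data into a ``real polarization.''
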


\begin{remark}
The theorem asserts that the Riemann-Roch number 
$RR(M,L)$ is localized at non-singular Bohr-Sommerfeld fibers 
and singular fibers. 
\end{remark}

\begin{remark}
While the Riemann-Roch number
$RR(M,L)$ for the closed symplectic manifold $M$
depends only on the symplectic structure $\omega$,
the localized Riemann-Roch number $RR(V_i, L)$ depends on
the restrictions of $\omega, \pi, \nabla$ as well,
though we omit this dependence in the notation
if there is no confusion.
\end{remark}

\begin{remark}
In fact Theorem~\ref{symplectic} holds for a 
Lagrangian fibration possibly with singular fibers. 
\end{remark}

In the next section we reduce the above localization 
theorem to 
a slightly more general localization 
(Theorem~\ref{topological})
formulated purely topologically. 

\section{Topological formulation}

In this section
let $M$ be a $2n$-dimensional
closed spin$^{c}$ manifold,\footnote{In this paper we take a convention of 
spin$^c$ structures which do not need any Riemannian metrics. See Appendix~\ref{spin^c-structures} 
for the convention.} and $E$ a complex Hermitian vector bundle over $M$. 
We define the Riemann-Roch number $RR(M, E)$
as the index of the spin${^c}$ Dirac operator with coefficients in 
$E$.\footnote{Precisely, in order to define the spin${^c}$ Dirac operator with coefficients in 
$E$ we need a unitary connection on $E$. But it is well-known that $RR(M, E)$ itself does not depend on the choice of connections. So we do not mention it here.}  
Let $V$ be an open subset of $M$.
\begin{definition}
{\it A real polarization} on $V$ is the data
$(U,\pi,\phi,J)$ satisfying the following properties.
\begin{enumerate}
\item
$U$ is an $n$-dimensional smooth manifold. 
\item
$\pi: V \to U$ is a fiber bundle whose fibers are 
disjoint unions of finitely many 
$n$-dimensional tori with affine structures.
\item
$\phi:\pi_*T_{\fiber}V \to TU$ is an isomorphism
between two real vector bundles, 
where $\pi_*T_{\fiber}V$ is the vector bundle on $U$ consisting of 
parallel sections of the tangent bundle of the fiber 
$\pi^{-1}(x)$ for each $x\in U$. 
\item
$J$ is an almost complex structure on $V$
which is a reduction of the given spin${}^c$-structure.
\item
The composition of $J:T_{\fiber} V \to TV$ and
$\pi_* :TV \to TU$ is equal to the map induced from $\phi$.
\end{enumerate} 
\end{definition}

Suppose that $V\subset M$ has a real polarization $(U,\pi, \phi,J)$  
such that the restriction $E|_V$ has 
a unitary connection $\nabla$ along fibers
for the bundle structure $\pi:V \to U$.
\begin{definition}
$(E, \nabla)$ is {\it acyclic} if 
the restriction $(E,\nabla)|_{\pi^{-1}(x)}$ is a flat vector bundle 
and the twisted de Rham cohomology group 
$H^* (\pi^{-1}(x), (E,\nabla)|_{\pi^{-1}(x)})$ is zero
for every $x \in U$.
\end{definition}

\begin{theorem} \label{topological}
Let $M$ be a closed spin${}^c$ manifold and
$E$ a complex Hermitian vector bundle over $M$.
Suppose $M=V_{\infty} \cup (\cup_{i=1}^m V_i)$ 
is an open covering satisfying the following properties.
\begin{enumerate}
\item
$\{V_i\}_{i=1}^m$ are mutually disjoint.
\item
$V_\infty$ has a real polarization $(U,\pi,\phi,J)$.
\item
$E|_{V_{\infty}}$ has a unitary connection $\nabla$ along fibers 
for the bundle structure $\pi: V_{\infty} \to U$.
\item
$(E|_{V_{\infty}}, \nabla)$ is acyclic.
\end{enumerate}
Then for each $i=1,\ldots,m$ there exists an
integer $RR(V_i, E)$, which depends only
on the data 
{restricted on $V_i$}, such that
$$
RR(M,E) = \sum_{i=1}^m RR(V_i,E).
$$
Here the integer $RR(V_i,E)$ is invariant under 
continuous deformations of the data. 
\end{theorem}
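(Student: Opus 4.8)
The plan is to realize the Riemann--Roch number $RR(M,E)$ as the index of a single spin${}^c$ Dirac operator on $M$ and then deform that operator, in the spirit of Witten's deformation, so that the deformed operator has no kernel or cokernel over a neighborhood of $V_\infty$. First I would fix a Riemannian metric and connections compatible with the given data, write down the spin${}^c$ Dirac operator $D_E$ on $M$, and recall that $RR(M,E)=\operatorname{Ind} D_E$ is independent of these choices and of compact perturbations of lower order. Over $V_\infty$ the real polarization $(U,\pi,\phi,J)$ splits $TV_\infty$ (up to the metric) into the vertical tangent bundle $T_{\fiber}V_\infty$ and a horizontal complement identified via $\phi$ with $\pi^*TU$; correspondingly the Clifford module splits and, by key point (1) in the introduction, the vertical part of $D_E$ agrees on the level of principal symbol with the de Rham operator along the torus fibers twisted by $(E,\nabla)$. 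The acyclicity hypothesis (4) means exactly that this fiberwise twisted de Rham complex has vanishing cohomology, hence the corresponding fiberwise Laplacian $\Delta_{\fiber}$ is a strictly positive second-order elliptic operator on each fiber, uniformly so on compact subsets of $U$.

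Next I would introduce the deformation. Pick a function (or section of an appropriate bundle) supported where the polarization lives and consider $D_E + t\,c(\xi)$, or more precisely add $t$ times a zeroth-order Clifford-type term built from the polarization data, and study the family as the adiabatic parameter $t\to\infty$ (equivalently, rescale the metric in the fiber directions). The square of the deformed operator then looks schematically like $D_E^2 + t^2\Delta_{\fiber} + t(\text{first order})$; by key point (5), strict positivity of $\Delta_{\fiber}$ lets the $t^2$-term dominate and absorb the cross term, so that over any region where the polarization is defined and acyclic the deformed operator is invertible for $t$ large. To globalize, I would cut off: let $V_\infty' \supset \cup_i V_i$ is a shrinking so that $M \setminus V_\infty'$ still carries the acyclic polarization, use a partition of unity subordinate to $\{V_i\}$ and $V_\infty \setminus \overline{\cup V_i}$, and glue the deformed operator (turned on only on the polarized part) to the undeformed $D_E$ on the $V_i$. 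An excision/Mayer--Vietoris argument for indices then yields $\operatorname{Ind} D_E = \sum_{i=1}^m \operatorname{Ind}(D_E|_{V_i})$ where each summand is the index of an operator supported on $V_i$; one defines $RR(V_i,E)$ to be this integer. Deformation invariance of $RR(V_i,E)$ under continuous deformation of the data follows because the whole construction is continuous in the data and the index is locally constant, provided the acyclicity persists on the overlaps during the deformation.

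The technical heart — and the main obstacle — is making the last two steps rigorous on a \emph{noncompact} piece $V_\infty$, where $\pi$ may fail to be proper, and at the interface between the deformed and undeformed regions. One must show that the deformed operator, restricted to a neighborhood of $V_\infty$ with suitable boundary behavior, is Fredholm with vanishing index (indeed trivial kernel and cokernel) for large $t$, and that the index localizes additively; the subtlety is that the cross term $t(\text{first order})$ and the curvature/torsion terms coming from the splitting of $TV_\infty$ are only controlled after the positive fiberwise Laplacian is made to dominate, which requires uniform lower bounds on $\Delta_{\fiber}$ and careful estimates near $\partial V_\infty'$. The natural way to handle this cleanly is to phrase everything as an index of a family of operators on a Hilbert bundle over $U$ — the fiber Hilbert space being $L^2$-sections along the torus fiber, the fiberwise Laplacian playing the role of a potential that grows to $+\infty$ — so that the Witten-type vanishing on $V_\infty$ becomes the statement that the potential is positive there. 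I would expect to isolate this analytic estimate as a separate lemma (the actual ``localization'' theorem promised for Section 5), reducing Theorem~\ref{topological} to it by the excision argument sketched above, so that here one only needs to set up the deformation and invoke that lemma.
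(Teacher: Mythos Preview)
Your overall strategy is the paper's, but there is a genuine gap in the key estimate and a muddling of what the deformation actually is. The paper does \emph{not} add a zeroth-order Clifford term $t\,c(\xi)$; it writes a Dirac-type operator on $V_\infty$ as $D=\tilde D_U+D_{\fiber}$ where $D_{\fiber}$ is the \emph{first-order} fiberwise twisted de Rham operator $d_{E,\fiber}+d_{E,\fiber}^*$, and deforms to $D_t=\tilde D_U+tD_{\fiber}$ (your parenthetical ``rescale the metric in the fiber directions'' is the right picture; your primary description is not). Your squared formula $D_t^2\sim \tilde D_U^2+t^2\Delta_{\fiber}+t(\text{first order})$ is then correct, but ``first order on $M$'' is not enough: $\Delta_{\fiber}$ is elliptic only along the fibers, so a generic first-order operator on $M$ containing base derivatives cannot be absorbed by $t^2\Delta_{\fiber}$ however large $t$ is. The missing structural ingredient is that $D_{\fiber}$ \emph{anti-commutes with the Clifford action of $TU$}; this is exactly what the lemma on parallel (harmonic) $1$-forms on affine tori provides, and it is built into the paper's notion of generalized real polarization. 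Anti-commutativity forces the cross term $\{D_{\fiber},\tilde D_U\}$ to be first order \emph{with only fiber derivatives}, hence controlled fiber by fiber by the strictly positive elliptic operator $D_{\fiber}^2$ via an a~priori estimate. Without isolating this point your vanishing step does not go through.

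Second, the paper does not just cut off and invoke a Mayer--Vietoris argument on open $V_i$; it \emph{defines} the local number $RR(V_i,E)=\Ind(V_i,W)$ by choosing a separating hypersurface $N\subset V_\infty$, attaching a cylindrical end $N\times[0,\infty)$ with translationally invariant acyclic polarization, and taking the super-dimension of the $L^2$-kernel of $D_t$ for large $t$ (equivalently an APS index). Well-definedness (independence of the hypersurface) and the sum formula both rest on the vanishing lemma above, and excision is then immediate. Your ``Fredholm with vanishing index on $V_\infty$ with suitable boundary behavior'' gestures at this, but to make it a proof you would have to replace it by precisely this cylindrical-end construction. In short: the reduction of Theorem~\ref{topological} to an analytic localization theorem is as you describe, but the two load-bearing facts---anti-commutativity forcing the cross term to be fiberwise, and the cylindrical-end definition of the local index---are absent from your sketch.
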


\noindent
\begin{proof}
[\bf Proof of Theorem~\ref{symplectic} assuming
Theorem~\ref{topological}.]
Note that the symplectic structure gives an isomorphism
$T^*U_{\infty} \cong \pi_*T_{\fiber} V_\infty$.
Fix a Riemannian metric on $TU_\infty$ so that
we have an isomorphism $TU_\infty \cong T^*U_\infty$.
Define $\phi$ by using these two.
By fixing a Lagrangian splitting $TV_{\infty}\cong T_{\fiber}V_{\infty}\oplus\pi^*TU_\infty$, 
$\phi$ induces an almost complex structure, 
which determines the spin$^c$ structure. 
We take $E$ to be $L$. Then the rest would be obvious.
\end{proof}

In the next section we further reduce
Theorem~\ref{topological} to 
more general localization for some Dirac-type
operator 
(Theorem~\ref{analytical}).

\section{Analytical formulation}

In this section
let $M$ be a 
closed Riemannian manifold.
We denote by $Cl(TM)$ the Clifford algebra bundle
over $M$ generated by $TM$.
Let
$W=W^0 \oplus W^1$ be
a ${\Z}/2\Z$-graded 
complex Hermitian vector bundle over $M$
with a structure of $Cl(TM)$-module
such that for each vector $v$ in $TM$,
the action of $v$ is skew-Hermitian and 
of degree-one.

We use the following definition of {\it Dirac-type operator}.
\begin{definition}
A first order differential operator $D:\Gamma(W) \to \Gamma(W)$
is {\it a Dirac-type operator on $W$} if $D$ is 
a degree-one formally self-adjoint
operator with smooth coefficient whose symbol is given
by the Clifford action on $W$.
\end{definition}

Dirac-type operators on $W$ are not unique, but their indices
are equal. 
\begin{definition}
We denote by $\Ind(M,W)$ 
the index of a Dirac-type operator on $W$.
\end{definition}

Let $V$ be an open subset of $M$.
\begin{definition}
{\it A generalized real polarization} 
on $V$ is the data 
$(U, \pi,D_{\fiber})$ satisfying the following properties.
\begin{enumerate}
\item
$U$ is a Riemannian manifold.
\item
$\pi: V \to U$ is a fiber bundle with
fiber a closed manifold.
\item
Let $TV = T_{\fiber}V \oplus T_{\fiber}^{\perp}V$ be
the orthogonal decomposition with respect to the Riemannian metric on $V$. Then
the projection gives an isometric isomorphism
$T_{\fiber}^{\perp}V \cong \pi^*TU$.
\item
$D_{\fiber}:\Gamma(W|_V) \to \Gamma(W|_V)$
is a family of Dirac-type operators along fibers
anti-commuting with the Clifford action of $TU$ in the
following sense.
\begin{enumerate}
\item
$D_{\fiber}$ is an order-one, formally self-adjoint differential 
operator of degree-one.
\item
$D_{\fiber}$ contains only the derivatives along fibers, i.e,
$D_{\fiber}$ commutes with multiplication of the pullback
of smooth functions on $U$.
\item
The principal symbol of $D_{\fiber}$ is given by
the Clifford action of $T_{\fiber} V$.
\item
The Clifford action of $TU$ on $W|_V$ 
anti-commutes with $D_{\fiber}$. 
Here the Clifford action of $TU$ on $W|_V$ is defined
through the horizontal lift 
$\pi^* TU \cong T_{\fiber}^{\perp}V \subset TV$, where
the first isomorphism is the one given in $3$ above.
\end{enumerate}
\end{enumerate} 
\end{definition}

\begin{definition}
A generalized real polarization $(U, \pi, D_{\fiber})$ on $V$ 
is {\it acyclic} if 
for each $x \in U$, the restriction of
$D_{\fiber}$ to 
$\Gamma(W|_{\pi^{-1}(x)})$ has  zero kernel.
\end{definition}

\begin{theorem} \label{analytical}
Let $M$ be a closed Riemannian manifold and
$W=W^0 \oplus W^1$ a $Cl(TM)$-module bundle as above. 
Suppose $M=V_{\infty} \cup (\cup_{i=1}^m V_i)$ 
is an open covering satisfying the following properties.
\begin{enumerate}
\item
$\{V_i\}_{i=1}^m$ are mutually disjoint.
\item
$V_\infty$ has an acyclic generalized 
real polarization $(U, \pi, D_{\fiber})$.
\end{enumerate}
Then for each $i=1,\ldots,m$ there exists an
integer $\Ind(V_i, W)$, which depends only
on the data 
{restricted on $V_i$}, such that
$$
\Ind(M,W) = \sum_{i=1}^m \Ind(V_i,W).
$$
Here the integer $\Ind(V_i,W)$ is invariant under 
continuous deformations of the data. 
\end{theorem}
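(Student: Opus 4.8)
\noindent
My plan is to prove Theorem~\ref{analytical} by a Witten-type deformation in which the fibrewise operator $D_{\fiber}$ plays the role of the deforming term, i.e.\ by passing to the adiabatic limit of the fibration on $V_\infty$. Since $M$ is compact and $\{V_i\}_i\cup\{V_\infty\}$ is an open cover, I would first shrink the $V_i$ to open sets $V_i'$ with $\overline{V_i'}\subset V_i$ so that $\{V_i'\}_i\cup\{V_\infty\}$ still covers $M$; then $K:=M\setminus\bigcup_i V_i'$ is compact and contained in $V_\infty$, and (by disjointness) $V_i\setminus\overline{V_i'}$, a neighbourhood of the end of $V_i$, lies in $V_\infty$. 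By acyclicity $D_{\fiber}$ is invertible on every fibre over $\pi(K)$, so by compactness and continuity of the fibrewise spectrum there is $c_0>0$ with $\langle D_{\fiber}^2 s,s\rangle\ge c_0\|s\|^2$ for sections $s$ supported over a neighbourhood $N$ of $K$ in $V_\infty$. Next I would fix a Dirac-type operator $D$ on $W$ which near $K$ is adapted to the polarization, $D=D_{\fiber}+D_U+(\text{order }0)$ with $D_U=\sum_a c(e_a)\nabla^{\mathrm{hor}}_{e_a}$ built from a horizontal orthonormal frame via $T_{\fiber}^{\perp}V\cong\pi^*TU$; the index is independent of this choice. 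With a cutoff $\chi$ supported in $N$ and equal to $1$ near $K$, define for $t\ge1$ the operator $D_t$ equal to $D$ off $\mathrm{supp}\,\chi$ and equal to $tD_{\fiber}+D_U$ where $\chi\equiv1$, interpolated by $\chi$ in between. Each $D_t$ is formally self-adjoint, of degree one, with symbol invertible off the zero section, so $\{D_t\}_{t\ge1}$ is a continuous family of elliptic operators and $\Ind D_t=\Ind D_1=\Ind(M,W)$ for all $t$.

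The heart of the argument is the estimate that for large $t$ the operator $D_t$ is \emph{uniformly invertible near $K$}. Where $\chi\equiv1$ one has $D_t^2=t^2 D_{\fiber}^2+D_U^2+t(D_{\fiber}D_U+D_U D_{\fiber})+(\text{lower order})$, and the decisive point is the last axiom of a generalized real polarization: the Clifford action of $TU$ anti-commutes with $D_{\fiber}$, which forces the cross term $D_{\fiber}D_U+D_U D_{\fiber}$ to be of order $\le1$ with bounded symbol. Since $t^2 D_{\fiber}^2\ge c_0 t^2$ there, the strictly positive second-order term absorbs the $O(t)$ first-order term, giving $D_t^2\ge\tfrac12 c_0 t^2$ on a smaller neighbourhood $N_0\supset K$ once $t$ is large. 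An IMS-type localization with a partition of unity subordinate to $\{N_0,\ M\setminus K\}$ then shows that every eigensection of $D_t$ with eigenvalue of modulus $\le\varepsilon_0\sqrt t$ is concentrated, with decay uniform in $t$, inside $\bigcup_i V_i$.

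I would then decouple. Let $P_t$ be the spectral projection of $D_t$ onto $(-\varepsilon_0\sqrt t,\varepsilon_0\sqrt t)$; since $D_t$ is of degree one it maps the even and odd parts of each eigenspace $E_\lambda$ ($\lambda\ne0$) isomorphically onto one another, so $\mathrm{sdim}\,E_\lambda=0$ and $\mathrm{sdim}(\mathrm{range}\,P_t)=\mathrm{sdim}\,\Ker D_t=\Ind(M,W)$. Because the $V_i$ are mutually disjoint and the low-energy eigensections are concentrated in $\bigcup_i V_i$ with uniform decay, for $t$ large $P_t$ is within operator norm $<1$ of a sum $\bigoplus_i P_{t,i}$ of mutually orthogonal projections onto subspaces $\mathcal H_{t,i}$ of sections concentrated near $V_i$; a standard perturbation argument makes the splitting exact, so $\Ind(M,W)=\sum_i\mathrm{sdim}\,\mathcal H_{t,i}$. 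To see that $\mathrm{sdim}\,\mathcal H_{t,i}=:\Ind(V_i,W)$ depends only on the data over $V_i$, I would cap off the end of $V_i$: since $V_i\setminus\overline{V_i'}\subset V_\infty$ carries the acyclic generalized real polarization, glue on a model end on which $D_{\fiber}$ stays invertible, producing a complete $\hat V_i$, a $Cl$-module $\hat W_i$ and a Dirac-type operator $\hat D_i$ that, after the same fibrewise $t$-rescaling near infinity, is uniformly invertible off a compact set, hence $L^2$-Fredholm; set $\Ind(V_i,W):=\Ind\hat D_i$. An excision argument ($D_t$ and $\hat D_i$ agree over $V_i'$ and are both uniformly large outside it) identifies $\Ind\hat D_i$ with $\mathrm{sdim}\,\mathcal H_{t,i}$ for $t$ large, and Fredholm homotopy invariance yields independence of the cap-off and of the remaining choices as well as invariance under deformations of the data. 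Applying the same deformation globally on $V_\infty$, where $D_{\fiber}$ is invertible everywhere, makes $D_t$ uniformly invertible on all of $V_\infty$, so $V_\infty$ contributes nothing — consistent with $\Ind(M,W)=\sum_i\Ind(V_i,W)$.

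The main obstacle is the uniform-in-$t$ analysis underlying the decoupling and the excision: one needs Agmon/finite-propagation-speed estimates localizing the low-energy spectral subspace of $D_t$ to $\bigcup_i V_i$ with constants independent of $t$, a clean comparison of those subspaces with the $L^2$-kernels of the capped-off operators $\hat D_i$, and a careful verification that the anti-commutation axiom really does reduce $t(D_{\fiber}D_U+D_U D_{\fiber})$ to a term that the positive operator $t^2 D_{\fiber}^2$ can absorb — this is the analytic core of the whole localization. Keeping every $D_t$ a genuine self-adjoint elliptic operator with invertible symbol (so that its index is unambiguous and locally constant in $t$), and proving that the cap-off construction yields a quantity independent of all choices, are the remaining points that will need care.
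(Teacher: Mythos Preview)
Your approach is sound and shares with the paper its analytic core: the anticommutator $D_{\fiber}\tilde D_U+\tilde D_U D_{\fiber}$ is at most first order, so $t^2 D_{\fiber}^2$ dominates it for large $t$; and your ``cap-off'' definition of $\Ind(V_i,W)$ as an $L^2$-index on a completion of $V_i$ is exactly the paper's Definition~\ref{definition for cylindrical case}/\ref{definition for general case}. The organization, however, is genuinely different. The paper does \emph{not} run a Witten deformation on the closed manifold $M$ and then localize the low-energy spectrum via IMS/Agmon estimates. Instead it defines the local index first, by attaching a cylindrical end to each $V_i$ (using the polarization on $V_i\cap V_\infty$ to make the data translation-invariant) and taking the super-dimension of the $L^2$-kernel of $D_t$ on that completion; the vanishing estimate, applied to the auxiliary closed manifold $N\times S^1$, shows the cross-section operator $D_{N,t}$ is invertible for large $t$, so this is an APS index, automatically finite and deformation-invariant. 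Additivity then comes from the standard gluing/sum formula for APS indices (Lemma~\ref{sum formula}) together with the vanishing lemma on a global cylindrical model (Lemma~\ref{vanishing for cylindrical case}), which immediately gives excision and hence $\Ind(M,W)=\Ind(\bigcup_i V_i,W)=\sum_i\Ind(V_i,W)$, without ever looking at spectral projections on $M$. Your route through low-energy spectral subspaces and their comparison with the capped-off kernels is legitimate but costs exactly the uniform-in-$t$ analysis you flag as the main obstacle; the paper's APS packaging sidesteps it entirely. One technical point worth sharpening in your sketch: the paper proves (Lemma preceding the proof of Lemma~\ref{vanishing for closed case}) that the anticommutator contains only \emph{fibrewise} derivatives, not merely ``order $\le1$''; this is what lets one bound it fibre-by-fibre by $C\int_{\pi^{-1}(x)}(D_{\fiber}^2 f,f)$ via the fibrewise elliptic estimate, making the absorption by $t^2\|D_{\fiber}f\|^2$ a single inequality with a uniform constant.
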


\noindent
The proof of Theorem~\ref{topological} follows from 
the next obvious lemma.

\begin{lemma}
Let $T$ be an $n$-dimensional torus with an affine structure.
Let ${\mathfrak X}$ be the $n$-dimensional vector space of parallel vector fields.
For any Euclidean metric on ${\mathfrak X}$, 
$T$ has an induced flat Riemannian metric.
Then each element of the dual space 
${\mathfrak X}^*$ gives a harmonic $1$-form.
\end{lemma}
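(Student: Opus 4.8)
The plan is to exhibit the $1$-forms in question as parallel forms for the flat connection underlying the affine structure, and then to observe that parallel forms on a closed flat Riemannian manifold are harmonic.

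First I would record that a basis $e_{1},\dots,e_{n}$ of $\mathfrak{X}$ is a global parallel frame of the tangent bundle of $T$: a parallel vector field that vanishes at one point vanishes everywhere by parallel transport, so $n$ linearly independent parallel fields are pointwise linearly independent. Let $\theta^{1},\dots,\theta^{n}\in\Gamma(T^{*}T)$ be the dual coframe; these are exactly the realizations of the dual basis of $\mathfrak{X}^{*}$ as $1$-forms (via the pairing with parallel vector fields), and they are parallel for the flat torsion-free connection $\nabla$ defining the affine structure. A Euclidean metric on $\mathfrak{X}$ is a constant positive-definite symmetric matrix $(g_{ij})$, and the induced Riemannian metric on $T$ is $g=\sum_{i,j}g_{ij}\,\theta^{i}\otimes\theta^{j}$; since the $g_{ij}$ are constants and the $\theta^{i}$ are $\nabla$-parallel, $g$ is $\nabla$-parallel. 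A torsion-free connection preserving a metric is its Levi-Civita connection, so $\nabla$ is the Levi-Civita connection of $g$; in particular $g$ is flat and each $\theta^{i}$ is covariantly constant.

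Next, take $\xi\in\mathfrak{X}^{*}$ and let $\alpha=\sum_{i}\xi_{i}\theta^{i}$ be the corresponding $1$-form, with the $\xi_{i}$ constant, so that $\alpha$ is $\nabla$-parallel. Since $\nabla$ is torsion-free, $d\alpha$ is the alternation of $\nabla\alpha=0$, hence $d\alpha=0$. The Hodge star operator of $g$ commutes with the Levi-Civita connection, so $\ast\alpha$ is parallel as well, hence closed, and therefore $d^{*}\alpha=\pm\ast d\ast\alpha=0$. Thus $\Delta\alpha=(dd^{*}+d^{*}d)\alpha=0$, i.e. $\alpha$ is harmonic. (Alternatively, one may invoke directly that any parallel form on a closed Riemannian manifold is harmonic.)

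There is essentially no obstacle here; the only point worth spelling out is the identification of the given affine connection $\nabla$ with the Levi-Civita connection of the induced metric — equivalently, that ``parallel'' in the sense of the affine structure agrees with ``covariantly constant'' in the Riemannian sense — and this is immediate because $\nabla$ is torsion-free and makes the constant-coefficient tensor $g$ parallel.
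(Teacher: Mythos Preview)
Your proof is correct; the paper does not actually prove this lemma but simply declares it ``obvious'' and moves on. Your argument spells out precisely the point the authors take for granted: the affine connection is torsion-free and makes the constant-coefficient metric parallel, hence coincides with the Levi-Civita connection, so the parallel $1$-forms coming from $\mathfrak{X}^{*}$ are covariantly constant and therefore harmonic.
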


\begin{proof}
[\bf Proof of Theorem~\ref{topological} assuming
Theorem~\ref{analytical}.] 
Fix a Riemannian metric on $U$. 
Combining it with the flat metric associated with 
the affine structures on fibers of $V_{\infty}$ via 
the almost complex structure $J$, 
we define a Riemannian metric on $V_{\infty}$ 
and extend it to $M$. 
Take $W$ to be 
the tensor product of the spinor bundle 
of the spin$^c$ manifold $M$ and $E$. 
Then we define the family of Dirac-type operators along fibers 
acting on $\Gamma(W|_{V_{\infty}})$ 
by $D_{\fiber}:=d_{E,\fiber}+d_{E,\fiber}^*$, where 
$d_{E,\fiber}$ is the exterior derivative on fibers 
twisted by the unitary connection $\nabla$ on $E$ and 
$d_{E,\fiber}^*$ is its formal adjoint. 
The above lemma implies the anti-commutativity 
between $D_{\fiber}$ and the Clifford action of $TU$. 
The acyclic condition for $(E,\nabla)$ implies the acyclicity 
for $(U,\pi,D_{\fiber})$. 
\end{proof}
\section{Local index}

Let $M$  be a Riemannian manifold and
$W=W^0 \oplus W^1$ a $Cl(TM)$-module bundle.
Suppose $V$ is an open subset of $M$ with
an acyclic generalized real polarization
$(U, \pi,D_{\fiber})$ such that
$M \setminus V$ is compact.
We will define the {\it local index} $\Ind(M,V,W)$ 
(or $\Ind(V,W)$ for short) 
and show deformation invariance and an excision property.
The local index
depends on the acyclic generalized real polarization on $V$ though
we omit it in the notation for simplicity.
\begin{remark}
When $\pi:V \to U$ is a diffeomorphism,
$D_{\fiber}$ is a degree-one
self-adjoint homomorphism on $W|_V$ anti-commuting
with the Clifford multiplication of $TV$.
In this special case the definition of $\Ind(V,W)$
is already given in \cite[Chapter 3]{index 1}.
(See Definition 3.14 for the setting,
Definition 3.21 for the definition in the case of
cylindrical end,
Theorem 3.20 for deformation invariance,
Theorem 3.29 for the excision property, and
Section 3.3 for the definition for general case.)
We will generalize the argument there.
\end{remark}

\subsection{Vanishing lemmas}
We will show the following lemma later.
\begin{lemma} \label{vanishing for closed case}
Suppose $M$ is closed and $M=V$ has an acyclic generalized real polarization. 
Take any Dirac-type operator $D$ on $W$ and
write $D=\tilde{D}_U + D_{\fiber}$ for some operator
$\tilde{D}_U$. 
For a real number $t$, put $D_t=\tilde{D}_U + t D_{\fiber}$.
Then for any large $t>0$, $\Ker D_t=0$.
\end{lemma}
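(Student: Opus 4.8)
The plan is to run Witten's deformation / Bochner argument on the closed manifold $M=V$. Write $D_t = \tilde D_U + tD_{\fiber}$ and compute its square. Since $D_{\fiber}$ is formally self-adjoint of degree one and $\tilde D_U$ is the ``remainder'', we get
\[
D_t^2 = \tilde D_U^2 + t(\tilde D_U D_{\fiber} + D_{\fiber}\tilde D_U) + t^2 D_{\fiber}^2 .
\]
The operator $D_{\fiber}^2$ is, fiberwise, the Laplacian associated to the fiberwise Dirac operator; by the acyclicity hypothesis $D_{\fiber}$ has zero kernel on each fiber $\pi^{-1}(x)$, and since the fibers are closed, $D_{\fiber}^2$ has a strictly positive spectral gap on each fiber. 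The first thing I would establish is that this gap is \emph{uniform} in $x\in U$: the family of fiberwise Laplacians depends smoothly on $x$, $U$ is covered by the (compact, since $M\setminus V$ is empty here) base, and the smallest eigenvalue is a continuous function of $x$ that is everywhere positive, hence bounded below by some $c>0$. Therefore $\langle t^2 D_{\fiber}^2 s, s\rangle \ge c t^2 \|s\|^2$ for all $s\in\Gamma(W)$.

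Next I would control the cross term $t(\tilde D_U D_{\fiber} + D_{\fiber}\tilde D_U)$. The key structural input is that $\tilde D_U$ carries the derivatives in the $U$-directions while $D_{\fiber}$ is fiberwise; their anticommutator is therefore a \emph{first-order} differential operator (the top-order pieces cancel because the symbols Clifford-anticommute, using condition (d) of the generalized real polarization), in fact one should check it is an operator whose order is strictly less than $\mathrm{ord}(D_{\fiber}^2)=2$. Call this operator $R$, so the cross term is $tR$ with $R$ of order $\le 1$. Then a standard interpolation/Young's-inequality estimate gives, for any $\varepsilon>0$,
\[
|\langle tR s, s\rangle| \le \varepsilon t \|D_{\fiber} s\|^2 + C_\varepsilon t \|s\|^2,
\]
using that $R$ of order one is bounded by $D_{\fiber}$ plus a multiple of the identity (elliptic estimate along fibers, uniform in $x$). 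Similarly $\tilde D_U^2 \ge 0$ up to a bounded zeroth-order error, i.e. $\langle \tilde D_U^2 s,s\rangle \ge -C_0\|s\|^2$ for some $C_0$.

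Combining the three estimates,
\[
\langle D_t^2 s, s\rangle \ \ge\ \bigl(c t^2 - \varepsilon c t^2 /? - C_\varepsilon t - C_0\bigr)\|s\|^2,
\]
and choosing $\varepsilon$ small (so the cross term absorbs into a fixed fraction of $ct^2$) and then $t$ large, the coefficient is positive; here I am using precisely point (5) of the introduction — the second-order positive operator $t^2D_{\fiber}^2$ dominates the first-order perturbation $tR$ for $t\gg 0$. Hence $\|D_t s\|^2 > 0$ for $s\neq 0$, so $\Ker D_t = 0$ for all large $t$, which is the claim. I expect the main obstacle to be bookkeeping the uniformity of all constants across $U$ and making the ``$R$ has order $\le 1$, hence $R$ is $D_{\fiber}$-bounded with small relative bound'' step precise — i.e. the elliptic estimate $\|R s\| \le \varepsilon\|D_{\fiber} s\| + C_\varepsilon \|s\|$ along fibers, uniform in the base point — rather than the algebra of $D_t^2$ itself, which is routine.
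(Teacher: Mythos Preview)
Your overall strategy---expand $D_t^2$, bound the cross term by the dominant $t^2D_{\fiber}^2$ term---is exactly the paper's approach. But there is one genuine gap.

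You argue that the anticommutator $R=\tilde D_U D_{\fiber}+D_{\fiber}\tilde D_U$ is a first-order operator on $M$, using that the principal symbols Clifford-anticommute. That is correct but insufficient for the estimate you then invoke: an ``elliptic estimate along fibers'' of the form $\|Rs\|\le \varepsilon\|D_{\fiber}s\|+C_\varepsilon\|s\|$ requires $R$ to be a \emph{fiberwise} operator, i.e.\ to contain no derivatives in the $U$-direction. A generic first-order operator on $M$ with base derivatives cannot be controlled by $D_{\fiber}$, which is only elliptic along fibers. The paper isolates this as a separate lemma: $R$ commutes with multiplication by functions pulled back from $U$. The proof uses the full strength of condition (d) in the definition of generalized real polarization---the Clifford action of $TU$ anticommutes with the \emph{operator} $D_{\fiber}$, not merely with its symbol. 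From this one sees that the principal symbol of $R$, viewed as a first-order operator on the infinite-dimensional bundle $\mathcal W\to U$, vanishes; hence $R$ is zeroth-order over $U$, i.e.\ purely fiberwise. You need to insert this step.

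Once you know $R$ is fiberwise and first-order, the paper's bookkeeping is cleaner than yours: since $D_{\fiber}^2$ is second-order elliptic with trivial kernel on each closed fiber, the a~priori estimate gives directly
\[
\Bigl|\int_{\pi^{-1}(x)}(Rf,f)\Bigr|\le C\int_{\pi^{-1}(x)}(D_{\fiber}^2 f,f),
\]
with $C$ uniform by compactness of $M$. Together with $\langle\tilde D_U^2 f,f\rangle=\|\tilde D_U f\|^2\ge 0$ (note $\tilde D_U=D-D_{\fiber}$ is formally self-adjoint, so your lower bound $-C_0\|s\|^2$ is unnecessarily weak), this yields $\|D_t f\|^2\ge \|\tilde D_U f\|^2+(t^2-Ct)\|D_{\fiber}f\|^2$, and for $t>C$ the conclusion $D_{\fiber}f=0\Rightarrow f=0$ follows from acyclicity. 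No $\varepsilon$ or interpolation is needed.
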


We also need a slightly generalized version,
which is shown later.

\begin{lemma}\label{vanishing for cylindrical case}
Suppose $M=V$ and $M$ has a cylindrical end with
translationally invariant acyclic generalized real polarization on it.
Take any Dirac-type operator $D$ on $W$ with
translationally invariance on the end,
and
write $D=\tilde{D}_U + D_{\fiber}$ for some operator
$\tilde{D}_U$. 
For a real number $t$, put $D_t=\tilde{D}_U + t D_{\fiber}$.
Then for any large $t>0$, 
$\Ker D_t \cap \{ {\mbox{\rm $L^2$-sections}} \} =0$.
\end{lemma}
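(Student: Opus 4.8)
The plan is to establish, for all sufficiently large $t$, a uniform lower bound $\langle D_t^2\psi,\psi\rangle\ge c(t)\,\|\psi\|^2$ with $c(t)>0$, valid for every compactly supported section $\psi$ on $M$ --- this is the a priori estimate underlying Lemma~\ref{vanishing for closed case} --- and then to upgrade it from compactly supported sections to $L^2$ null-sections on the complete, non-compact manifold $M$. The translation invariance of all the data on the cylindrical end is exactly what keeps the estimate uniform there, and it is the only feature not already present in the closed case. The first ingredient is a \emph{uniform fiberwise spectral gap}: for each $x\in U$ the restriction of $D_{\fiber}$ to the closed fiber $\pi^{-1}(x)$ is an elliptic self-adjoint operator with zero kernel (acyclicity), so its square has a positive gap $\lambda(x)$; since $M$ is a compact piece glued to a translationally invariant end on which $D_{\fiber}$ and the polarization are invariant, $\lambda_0:=\inf_x\lambda(x)>0$, and hence $\|D_{\fiber}\psi\|^2\ge\lambda_0\|\psi\|^2$ fiberwise.

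For the estimate itself I would expand $D_t^2=\tilde{D}_U^2+t\,R+t^2D_{\fiber}^2$ with $R:=\tilde{D}_U D_{\fiber}+D_{\fiber}\tilde{D}_U$. Because the principal symbols of $\tilde{D}_U$ and $D_{\fiber}$ are Clifford multiplications by orthogonal (horizontal, resp.\ vertical) covectors, which anticommute, the order-two symbol of $R$ vanishes, so $R$ is a formally self-adjoint operator of order $\le 1$ whose coefficients are bounded uniformly on $M$. The point is then to absorb the coupling term $tR$ between the strictly positive fiber potential $t^2D_{\fiber}^2\ge t^2\lambda_0$ and the horizontally elliptic term $\tilde{D}_U^2$: using the fiberwise ellipticity of $D_{\fiber}$ together with the gap to control the vertical derivatives of $\psi$, and the horizontal ellipticity of $\tilde{D}_U$ to control its horizontal derivatives, one shows that $t\,|\langle R\psi,\psi\rangle|$ is swallowed by $\tfrac14 t^2\|D_{\fiber}\psi\|^2$ together with part of $\|\tilde{D}_U\psi\|^2$, up to a controlled $\|\psi\|^2$-error, so that $\langle D_t^2\psi,\psi\rangle\ge c(t)\|\psi\|^2$ with $c(t)>0$ for $t$ large and all $\psi\in C^\infty_c(W)$. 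Equivalently, $D_t$ is, up to a positive scalar and lower-order terms, the Dirac-type operator of the adiabatic metric obtained by scaling the fibers by $t^{-2}$, and in the associated Lichnerowicz formula the base--fiber coupling and the O'Neill-type curvature terms become negligible against the fiber potential $t^2D_{\fiber}^2$; this is the mechanism of point~5 of the introduction.

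To pass from this estimate to the $L^2$ kernel, recall that a cylindrical-end manifold is complete, so $D_t$ is essentially self-adjoint on $C^\infty_c(W)$. If $\psi\in L^2$ satisfies $D_t\psi=0$, then elliptic regularity makes $\psi$ smooth, essential self-adjointness puts $\psi$ in the domain of the closure of $D_t$, and $C^\infty_c(W)$ is dense there in the graph norm; since the estimate above is uniform on all of $M$ it extends to $\psi$, giving $0=\|D_t\psi\|^2=\langle D_t^2\psi,\psi\rangle\ge c(t)\|\psi\|^2$ for $t$ large, hence $\psi=0$. (Alternatively, one first shows that $\psi$ decays exponentially along the end --- there $D_t$ is translation invariant and its cross-sectional operator is invertible for large $t$, again by the gap --- which makes the integration by parts and a cutoff approximation explicit.)

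The step I expect to be the real obstacle is the a priori estimate: making precise the claim that the first-order base--fiber coupling $R$ is dominated by the positive second-order fiber Laplacian $D_{\fiber}^2$. In the adiabatic/Lichnerowicz route the work is in bounding, uniformly in $t$, the O'Neill-type corrections that relate the fiberwise operator $D_{\fiber}$ to the restriction of the adiabatic Dirac operator; in the direct route one must show that the horizontal first-order component of $\{\tilde{D}_U,D_{\fiber}\}$ is itself controlled by $D_{\fiber}$ through fiberwise ellipticity. By contrast the uniform gap, elliptic regularity, essential self-adjointness on the complete manifold, and the exponential decay on the end are all routine, and the cylindrical case differs from the closed case only in these routine points.
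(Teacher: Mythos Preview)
Your overall strategy is the paper's strategy: expand $D_t^2$, dominate the cross term $tR$ with $R=\tilde D_U D_{\fiber}+D_{\fiber}\tilde D_U$ by the fiber potential $t^2D_{\fiber}^2$, and then justify the passage from compactly supported sections to $L^2$ null-sections using the cylindrical structure. The paper's own proof of Lemma~\ref{vanishing for cylindrical case} is literally three sentences: it says the estimate is identical to the closed case (Lemma~\ref{vanishing for closed case}), and the only new point is that integration by parts is valid because an $L^2$ solution of $D_tf=0$ decays exponentially on the end. So on the analytic side your plan matches the paper exactly; your alternative via essential self-adjointness on the complete manifold is also perfectly fine and arguably cleaner, since it does not need the exponential decay (which the paper notes depends on the closed-case lemma applied to the cross-section).

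Where you diverge is the cross-term estimate, which you flag as ``the real obstacle.'' It is not an obstacle, and the reason is a structural fact you have overlooked. You observe correctly that the order-two symbol of $R$ vanishes because horizontal and vertical Clifford multiplications anticommute, so $R$ has order $\le 1$. But the definition of a generalized real polarization requires more (condition 4(d)): the Clifford action of $TU$ anticommutes with the \emph{whole} operator $D_{\fiber}$, not merely with its symbol. Since $[\tilde D_U,\pi^*g]$ is exactly Clifford multiplication by the horizontal covector $d g$, and $D_{\fiber}$ already commutes with $\pi^*g$, one gets $[R,\pi^*g]=0$ for every smooth $g$ on $U$. Thus $R$ contains \emph{no} horizontal derivatives at all; it is a first-order operator purely along the fibers. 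The paper isolates this as a separate lemma just before the proof of Lemma~\ref{vanishing for closed case}. Once you know this, the fiberwise a~priori estimate for the strictly positive second-order elliptic operator $D_{\fiber}^2$ gives
\[
\Bigl|\int_{\pi^{-1}(x)}(Rf,f)\Bigr|\le C\int_{\pi^{-1}(x)}(D_{\fiber}^2 f,f)
\]
uniformly in $x$ (compactness of the non-cylindrical part plus translation invariance on the end), and hence
\[
\|D_tf\|^2\ge \|\tilde D_U f\|^2+(t^2-Ct)\|D_{\fiber}f\|^2.
\]
For $t>C$ this forces $D_{\fiber}f=0$, and acyclicity then gives $f=0$. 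There is no need to invoke horizontal ellipticity of $\tilde D_U$, no O'Neill-type corrections, and no adiabatic Lichnerowicz formula; the whole estimate lives on individual fibers.
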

Admitting these lemmas
we first give the definition and properties of 
the local index.

\subsection{Cylindrical end}

We first give the definition for the
special case that $M$ has a cylindrical end and
every data is translationally invariant on the end.

\begin{lemma}\label{invariance for cylindrical case}
Suppose $M$ has a cylindrical end $V=N \times (0,\infty)$
with translationally invariant acyclic generalized real polarization
on it. Let $\rho$ be a non-negative smooth cut-off function on $M$
satisfying $\rho =1$ on $N \times [1,\infty)$ and
$\rho=0$ on $M \setminus V$. 
For $t>1$, put $\rho_t:=1+t\rho$. 
Take any Dirac-type operator $D$ on $W$ with
translationally invariance on the end,
and
write $D=\tilde{D}_U + D_{\fiber}$ for some operator
$\tilde{D}_U$ on the end.
Put $D_t=\tilde{D}_U +\rho_t \, D_{\fiber}$
on the end and $D_t=D$ on $M \setminus V$.
Then for any large $t>0$, 
$\Ker D_t \cap \{ {\mbox{\rm $L^2$-sections}} \}$
is finite dimensional.
Moreover its super-dimension is independent of
large $t$ and any other continuous deformations
of data.
\end{lemma}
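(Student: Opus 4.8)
The plan is to regard $D_t$ as a Dirac-type operator on the complete manifold $M$, to show that for $t$ large it is Fredholm because it is invertible near infinity, and then to recognize the super-dimension of its $L^2$-kernel as the index of its chiral part, which is a homotopy invariant. Since $\rho$ vanishes on $M\setminus V$, extending $\rho D_{\fiber}$ by zero we have $D_t=D+t\rho D_{\fiber}$ globally on $M$; this is a formally self-adjoint first-order operator of degree one with the same principal symbol as $D$, hence a Dirac-type operator on $W$, and as $M$ is complete it is essentially self-adjoint on $L^2(W)$. Thus $\Ker D_t\cap\{L^2\text{-sections}\}$ is the kernel of the self-adjoint closure, which I again write $D_t$.

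On the part $N\times[1,\infty)$ of the end, where $\rho\equiv1$, $D_t$ equals the translationally invariant operator $\tilde{D}_U+(1+t)D_{\fiber}$, so everything near infinity is governed by the closed cross-section $N$ and the model cylinder $N\times\R$. The cross-section $N$, equipped with the restricted data, is a closed manifold carrying an acyclic generalized real polarization, and after the standard grading adjustment on the boundary restriction of $W$ the associated boundary operator $B_t$ of $D_t$ on $N$ has the form $\tilde{D}_{U_0}+(1+t)D_{\fiber}^N$ to which Lemma~\ref{vanishing for closed case} applies; since $N$ is closed, $\Ker B_t=0$ for $t$ large is equivalent to invertibility of $B_t$ with a uniform spectral gap. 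Writing $D_t=c(ds)(\partial_s+B_t)$ near infinity, with $c(ds)$ anticommuting with $B_t$, one gets $D_t^2=-\partial_s^2+B_t^2$ on $N\times\R$, so this gap upgrades to genuine invertibility of $D_t$ on $L^2(N\times\R)$ for $t$ large (Lemma~\ref{vanishing for cylindrical case} gives the weaker statement that the $L^2$-kernel alone vanishes). Patching this invertibility near infinity with the interior elliptic estimate on the compact set $M\setminus(N\times(1,\infty))$ in the standard way shows that $D_t$ has finite-dimensional kernel and closed range; in particular $\Ker D_t\cap\{L^2\text{-sections}\}$ is finite dimensional and $D_t$ is Fredholm.

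Since $D_t$ has degree one, its chiral part $D_t^{+}\colon L^2(W^0)\to L^2(W^1)$ is Fredholm with $(D_t^{+})^{*}=D_t^{-}$, and the super-dimension of $\Ker D_t\cap\{L^2\text{-sections}\}$ equals $\dim\Ker D_t^{+}-\dim\mathrm{coker}\,D_t^{+}=\mathrm{ind}\,D_t^{+}$. For a continuous deformation of the data preserving all the hypotheses (in particular keeping $M\setminus V$ compact and the generalized real polarization acyclic and translationally invariant on the end, and $D$ translationally invariant there), the spectral gap of $B_t$ varies continuously and stays uniformly positive on compact parameter intervals once $t$ is large enough, so the threshold for $t$ can be chosen uniform and $D_t$ remains Fredholm along the deformation; homotopy invariance of the Fredholm index then forces $\mathrm{ind}\,D_t^{+}$ to be constant. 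Applying this to the path $t\mapsto D_t$ in particular shows that the super-dimension is independent of large $t$, which completes the proof.

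The step I expect to be the main obstacle is the passage from the vanishing lemmas to a uniform invertibility estimate near infinity: one must work with the closed cross-section (knowing only that the $L^2$-kernel on $N\times\R$ vanishes is not enough, since continuous spectrum could still touch $0$) to obtain a quantitative bound $\|D_tu\|\ge c_t\|u\|$ for $u$ supported near infinity with $c_t$ bounded below along the deformation, and then to splice it with the interior elliptic estimate with constants uniform in the deformation parameter, so that the family stays Fredholm. Essential self-adjointness of $D_t$ on the noncompact complete manifold $M$ and the identification of the super-dimension with an honest $L^2$-index are, by comparison, routine.
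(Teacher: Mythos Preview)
Your proposal is correct and follows essentially the same route as the paper: on the far end write $D_t=\alpha(\partial_r+D_{N,t})$, show the cross-section operator $D_{N,t}$ is invertible for large $t$, and deduce from the resulting spectral gap that the $L^2$-kernel is finite dimensional with deformation-invariant super-dimension (the paper phrases this last step as an APS index and cites the exponential decay of $L^2$-solutions).

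The one technical difference worth flagging is how $\Ker D_{N,t}=0$ is obtained. You apply Lemma~\ref{vanishing for closed case} directly to the closed cross-section $N$ after the standard grading adjustment on $W|_N$. This works, but note that $B_t$ is even rather than odd for the original grading and $W|_N$ is a $Cl(TM|_N)$-module rather than a $Cl(TN)$-module, so strictly speaking you are invoking the \emph{proof} of Lemma~\ref{vanishing for closed case} (which only uses self-adjointness, fiberwise positivity of $D_{\fiber}^2$, and that the anticommutator $\{\tilde D,D_{\fiber}\}$ is first order along fibers) rather than the lemma as literally stated. The paper sidesteps this by forming the operator $\alpha(\partial_r+D_{N,t})$ on the closed manifold $N\times S^1$, where all the data descend verbatim from the translationally invariant cylinder, applies Lemma~\ref{vanishing for closed case} to that as a black box, and then reads off $\Ker D_{N,t}=0$ (a nonzero element of $\Ker D_{N,t}$ would give an $r$-constant element of $\Ker D_{N\times S^1,t}$). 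Your route is equally valid but needs a line justifying that the modified Clifford and grading structure on $N$ still satisfies the hypotheses driving the proof; the paper's $N\times S^1$ trick buys a clean black-box application at the cost of introducing one auxiliary closed manifold.
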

\begin{proof}
The restriction of $D_t$ to $N \times [1,\infty)$
is of the form $\alpha (\partial_r + D_{N,t})$ where $\alpha$ is
the Clifford multiplication of $\partial_r$
and $D_{N,t}$ is a formally self-adjoint operator
on $N$ depending on the parameter $t$.
We show that $\Ker D_{N,t}=0$ for large $t$.
It is technically convenient to introduce
a Dirac-type operator $D_{N \times S^1,t}$ on
$N \times S^1$ as follows:
$D_{N \times S^1,t}$ is written 
as the same expression $\alpha (\partial_r + D_{N,t})$
where we use the identification $S^1=\R/\Z$.
Since $N \times S^1$ is a closed manifold, 
we can apply  
Lemma~\ref{vanishing for closed case} to obtain
$\Ker D_{N \times S^1,t}=0$ for large $t$,
which implies our claim $\Ker D_{N,t}=0$ for large $t$.


When $D_{N,t}$ does not have zero as an eigenvalue, 
any $L^2$-solution $f$ for
the equation $D_t f=0$ on $M$ is exponentially decreasing
on the end. Then it is well-known that
the space of $L^2$-solutions is finite dimensional,
and its super-dimension is deformation invariant
as far as $\Ker D_{N,t}=0$. 
\end{proof}
The super-dimension of the space of $L^2$-solutions
is equal to the index of $D_t |_{M \setminus V}$
for the Atiyah-Patodi-Singer boundary
condition. We use this index as the definition of
our local index for the case of cylindrical end. 
\begin{definition}
\label{definition for cylindrical case}
Under the assumption of 
Lemma~\ref{invariance for cylindrical case},
$\Ind(M,V,W)$ is defined to be
the super-dimension of
$\Ker D_t \cap \{ {\mbox{\rm $L^2$-sections}} \}$.
\end{definition}
The following sum formula follows from a standard argument.
\begin{lemma}\label{sum formula}
Suppose $(M,V=N \times (0,\infty),W)$
and $(M', V' = N' \times (0,\infty),W')$
satisfy the assumption of 
Lemma~\ref{invariance for cylindrical case}.
Let $N_0$ and $N'_0$ be a connected component of
$N$ and $N'$ respectively.
Suppose $N_0$ is isometric to $N'_0$
via $\phi:N_0 \to N'_0$,
and for some $R>0$ 
the map $\phi:N_0 \times (0, R) \to N'_0 \times (0,R)$
given by $(x, r) \mapsto (\phi (x), R-r)$ can be
lifted to the isomorphism between the acyclic generalized real polarizations
on them. 
Then we can glue $M\setminus (N_0 \times [R,\infty))$
and $M' \setminus (N'_0 \times [R,\infty))$
to obtain a new manifold $\hat{M}$
with cylindrical end $\hat{V}=\hat{N} \times (0,\infty)$ for
$\hat{N}=(N\setminus N_0)\cup (N' \setminus N'_0)$, 
and we also have a Clifford module bundle $\hat W$ 
obtained by gluing $W$ and $W'$ on 
$N_0 \times (0, R) \cong N'_0 \times (0,R)$. 
Then we have 
$$
\Ind(\hat{M},\hat{V},\hat W)=\Ind(M,V,W)+\Ind(M',V',W'). 
$$
\end{lemma}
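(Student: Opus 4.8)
The plan is to reduce the asserted identity to the standard splitting formula for Atiyah--Patodi--Singer indices along a separating hypersurface whose tangential operator has trivial kernel. Recall from the discussion preceding Definition~\ref{definition for cylindrical case} that, for $t$ large, each of $\Ind(M,V,W)$, $\Ind(M',V',W')$ and $\Ind(\hat M,\hat V,\hat W)$ equals the APS index of the corresponding deformed operator $D_t$ on the compact manifold with boundary obtained by truncating the cylindrical ends, the boundary condition being the spectral projection of the associated tangential operator $D_{N,t}$, which is invertible for $t$ large.

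First I would arrange compatible data for the three manifolds. Choose the cut-off functions on $M$ and $M'$ so that, in addition to the usual requirements, they are identically $1$ on a neighbourhood of the mid-slices $N_0\times\{R/2\}$ and $N'_0\times\{R/2\}$ inside the necks $N_0\times(0,R)$ and $N'_0\times(0,R)$; these are still admissible cut-offs, so the corresponding deformed operators $D_t$ and $D'_t$ still compute $\Ind(M,V,W)$ and $\Ind(M',V',W')$, now realized as APS indices on the compact pieces $X$ and $X'$ obtained by truncating the $N_0$- and $N'_0$-ends at the mid-slices and the remaining ends at level $1$. Since $\phi$ is an isometry covered by an isomorphism of the generalized real polarizations, and since the gluing reverses the fibre coordinate $r\mapsto R-r$ and hence flips the Clifford multiplication by the normal, the operators $D_t$ and $D'_t$ agree near the mid-slices after the gluing identification; they therefore glue to a Dirac-type operator $\hat D_t$ on $\hat W$ which is translationally invariant on $\hat V$ and on the neck, and $\hat X:=X\cup_{N_0}X'$ is precisely $\hat M$ with its $\hat N$-ends truncated at level $1$, carrying the common image $Z$ of the two mid-slices as a separating hypersurface. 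One more application of Lemma~\ref{invariance for cylindrical case} --- deforming the cut-off of this $\hat D_t$ on the neck back to one supported in $\hat V$, which does not change the operator on $\hat V$ --- shows that $\hat D_t$ on $\hat X$ computes $\Ind(\hat M,\hat V,\hat W)$ as an APS index.

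It remains to verify the hypothesis of the splitting formula at $Z$ and to apply it. The tangential operator along $Z$ is $D_{N_0,t}$, and $\Ker D_{N_0,t}=0$ for $t$ large: the translationally invariant acyclic generalized real polarization on the neck restricts to one on $N_0$, whose product with $S^1$ is a closed manifold with an acyclic generalized real polarization, so Lemma~\ref{vanishing for closed case} applied to the associated Dirac-type operator on $N_0\times S^1$ forces $0\notin\mathrm{spec}(D_{N_0,t})$ --- this is the computation already used in the proof of Lemma~\ref{invariance for cylindrical case}. Hence the APS conditions induced on the two copies of $Z$ in $X$ and $X'$ are the complementary spectral projections of $D_{N_0,t}$, and the standard hypersurface-splitting formula for APS indices yields
$$
\Ind(\hat M,\hat V,\hat W)=\Ind(M,V,W)+\Ind(M',V',W').
$$
Equivalently, one can work on the non-compact side: $\hat D_t$ is Fredholm on $L^2(\hat M)$ and invertible near $Z$, so its $L^2$-solutions decay exponentially across the neck, and a parametrix-patching (excision) argument identifies $\Ker\hat D_t\cap L^2(\hat M)$ with $(\Ker D_t\cap L^2(M))\oplus(\Ker D'_t\cap L^2(M'))$ up to finite-dimensional corrections that cancel in the super-dimension. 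I expect the one genuinely fiddly point to be the bookkeeping of the first two paragraphs --- choosing cut-offs, truncations, and the Dirac-type operator $\hat D$ so as to be simultaneously compatible with the gluing and legitimate for computing each local index via Lemma~\ref{invariance for cylindrical case} --- the index-theoretic core being the by-now-routine splitting along a hypersurface with invertible tangential operator.
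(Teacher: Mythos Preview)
Your proposal is correct and coincides with the paper's own argument: the paper's proof is a single sentence pointing to the APS index formula (with the excision property of \cite[Theorem~5.40]{index 1} offered as an alternative), and you have spelled out precisely that APS route---truncating to compact pieces, checking via Lemma~\ref{vanishing for closed case} that the tangential operator along the gluing slice is invertible for large $t$, and invoking the standard splitting formula. Your remark that the only delicate point is the bookkeeping (compatible choices of $D$, $D'$, and cut-offs so that the deformed operators glue) is apt; the paper simply suppresses this.
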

\begin{proof}
A proof is given by the APS formula of the indices. An alternative direct argument is to apply the excision property~\cite[Theorem~5.40]{index 1}. 
\end{proof}

\subsection{General case}

Now we would like to define the local index for
general case.

Let $V$ be an open subset of $M$ with
an acyclic generalized real polarization $(U,W_U, \pi,D_{\fiber})$
such that
$M \setminus V$ is compact. 
We can take a codimension-one closed submanifold 
$N_U$ of $U$ such that $N=\pi^{-1}(N_U)$
divides $M$ into compact part and non-compact part:
For instance, let $f: M \to [0,\infty)$ be
the distance from $M\setminus V$ and define
$g: U\to [0,\infty)$ to be the maximal value
of $f$ on the fiber of $\pi$.
Take a small real number
$r>0$ such that $f^{-1}([0,r])$ is a compact subset
of $M$. Let $h:U \to [0,\infty)$ be a
smooth functions on $U$ satisfying $|h(x)-g(x)| <r/2$ for
$x \in U$. Take a regular value $r_0$ of $h$ satisfying
$0<  r_0 <r/2$. Then $N_U=h^{-1}(r_0)$ satisfies
the required property.

Let $K$ be the compact part of $M \setminus N$.
Note that a neighborhood $V_N$ of $N$ in $V$
is diffeomorphic to
$N \times (-\epsilon, \epsilon)$.
Then we can construct the Riemannian metric 
and the translationally invariant acyclic 
generalized real polarization 
on $(M',V')$, where 
$M':=K\cup (N\times [0,\infty))$ and 
$V':=(K\cap V)\cup (N\times [0,\infty))$. 
For instance, let $\phi:M' \to M$ be
a smooth map which is the identity
on the complement of $N\times (-\epsilon,\infty)$ and
is given by 
$(x,r) \mapsto (x, \beta(r))$ on
$N\times (-\epsilon,\infty)$ 
for a smooth function $\beta$ satisfying
$\beta(r)= r$ for 
$-\epsilon <r<-(2/3)\epsilon$, and 
$\beta(r)=0$ for 
$r \geq(1/3)\epsilon$.
Define a bundle endmorphism 
$\tilde{\phi}: TM' \to TM$ covering $\phi$ 
as follows. On the complement of $N\times (-\epsilon,\infty)$, 
$\tilde{\phi}$ is the identity. 
On $T(N\times (-\epsilon,\infty))=TN \times T(-\epsilon,\infty)$, 
define $\tilde{\phi}$ by
$((x,r),(u,v)) \mapsto (\phi(x,r),(u,v))$,
where $x \in N$, $r \in (-\epsilon,\infty)$, 
$u \in T_xN$ and 
$v \in T_r(-\epsilon,\infty)={\R}$. 
The required deformed Riemannian metric
is defined to be the pullback of the original
Riemannian metric by $\tilde{\phi}$ 
as a section of the symmetric tensor product of $T^*M$.
The required deformed Clifford module bundle $W'$ is
defined to be the pullback $\phi^* W$. 
Note that we can also construct a Riemannian manifold $U'$ 
with a cylindrical end $N_U\times (0,\infty)$ and 
a fiber bundle $\pi':V'\to U'$. 
We define a Dirac-type operator $D_{\fiber}'$ acting on $\Gamma(W'|_{V'})$ 
by $\phi^*D_{\fiber}$. 
Then $(U',\pi',D_{\fiber}')$ is a 
translationally invariant acyclic generalized real polarization 
on $(M', V',W')$. 
For this structure the local index is defined 
by Definition~\ref{definition for cylindrical case}.

\begin{definition}\label{definition for general case}
We define $\Ind(M,V,W)$ to be the local index
for $(M', V',W')$ with 
the translationally invariant acyclic generalized real polarization 
$(U',\pi',D_{\fiber}')$. 
\end{definition}
We have to show the local index is well-defined
for the various choice of our construction.
\begin{lemma} \label{well-definedness of Ind}
Suppose we take two codimension-one closed submanifolds
$N_U$ and $N_U'$ in $U$ so that $M$ is divided
in two ways.
Then the local indices defined by these
data coincide. 
\end{lemma}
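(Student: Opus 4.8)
The plan is to reduce the comparison of the two local indices to the sum formula of Lemma~\ref{sum formula} together with a vanishing statement. First I would dispose of the easy sub-case where one dividing submanifold lies ``inside'' the other, i.e. where $N_U'$ is contained in the non-compact region cut off by $N_U$ (so that $N_U$ and $N_U'$ are disjoint and cobound a region $P \subset V$ which is a fiber bundle over a cobordism $P_U \subset U$ between $N_U$ and $N_U'$, carrying a translationally invariant-in-neither-direction but genuinely polarized structure). In this case, take the manifold $M'$ with cylindrical end attached along $N$ as in Definition~\ref{definition for general case}, and observe that cutting it instead along $N'$ produces a manifold $M''$ which differs from $M'$ only by the compact collar $P$ inserted before the cylinder. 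The point is that $\Ind(M'',V'',W'') - \Ind(M',V',W'')$ equals, by the sum formula, the local index of the ``cobordism piece'' $P$ capped with a cylinder on both ends; and this piece carries an acyclic generalized real polarization with \emph{no} compact part ($M\setminus V$ does not meet it), so Lemma~\ref{vanishing for cylindrical case} (applied after doubling/closing up as in the proof of Lemma~\ref{invariance for cylindrical case}) shows its local index is $0$. Hence the two local indices agree in this nested sub-case.

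For the general case, where $N_U$ and $N_U'$ may intersect, the standard trick is to find a third submanifold $N_U''$ that is ``deeper'' than both — concretely, using the distance function $f$ from $M\setminus V$ and its fiberwise-max $g$ as in the construction preceding Definition~\ref{definition for general case}, one can choose a regular value $r_0''$ large enough that $N_U'' = h''^{-1}(r_0'')$ lies in the non-compact side of both $N_U$ and $N_U'$. Then apply the nested sub-case twice: $\Ind$ via $N_U$ equals $\Ind$ via $N_U''$, and $\Ind$ via $N_U'$ equals $\Ind$ via $N_U''$, hence the two coincide. This is exactly the comparison-via-a-common-refinement argument familiar from APS-type index theory and from the excision arguments of \cite[Chapter~3]{index 1}.

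The main technical obstacle is making precise the identification of the ``cobordism piece'' $P$ as a manifold carrying a translationally invariant acyclic generalized real polarization \emph{after} closing it up into a cylindrical-end manifold, so that Lemma~\ref{vanishing for cylindrical case} genuinely applies: one must check that the fiber bundle structure $\pi$, the orthogonal splitting $T_{\fiber}V\oplus T^\perp_{\fiber}V\cong \pi^*TU$, and the fiberwise Dirac operator $D_{\fiber}$ all extend over the glued-on cylinders compatibly, and that acyclicity (zero kernel of $D_{\fiber}$ fiberwise) is preserved — this is inherited directly from the hypothesis on $V$ since $P\subset V$, but the bookkeeping of metrics and cut-off functions in the gluing (parallel to the $\beta$-function construction before Definition~\ref{definition for general case}) needs care. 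A secondary point is to confirm that the super-dimension counted in Definition~\ref{definition for cylindrical case} is insensitive to the precise choice of cut-off $\rho$ and of the deforming map $\phi$; but this is already contained in the deformation-invariance clause of Lemma~\ref{invariance for cylindrical case}, so it costs nothing new here. I expect the argument to be essentially formal once the nested sub-case is set up correctly.
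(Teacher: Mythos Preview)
Your proposal is correct and follows essentially the same route as the paper: introduce a third dividing hypersurface $N_U''$ to reduce the general comparison to a nested one, then in the nested case apply the sum formula (Lemma~\ref{sum formula}) and kill the cobordism piece---which carries a global acyclic generalized real polarization---using Lemma~\ref{vanishing for cylindrical case}. The only noticeable difference is the direction in which you move $N_U''$: you push it \emph{outward} (choosing $r_0''$ large, so that $K,K'\subset K''$), whereas the paper pushes it \emph{inward}, taking $K''$ inside the intersection of the interiors of $K$ and $K'$. Both choices work, but the inward one is slightly safer and cleaner: it is always available (one can take $N_U''$ arbitrarily close to $M\setminus V$ via the distance-function construction), while the outward choice requires checking that the ``far sides'' of $N_U$ and $N_U'$ have enough room to accommodate a common separating hypersurface. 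One small quibble: the parenthetical ``applied after doubling/closing up as in the proof of Lemma~\ref{invariance for cylindrical case}'' is unnecessary---Lemma~\ref{vanishing for cylindrical case} applies directly to the cobordism piece with its two cylindrical ends, since that piece is globally polarized.
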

\begin{proof}
Let $K$ and $K'$ be the compact parts of $M$
divided by $N=\pi^{-1}(N_U)$ and $N'=\pi^{-1}(N_U')$
respectively.
Then we can take another $N_U''$ so that
the corresponding compact part $K''$ is contained
in the intersection of the interiors of $K$ and $K'$.
It suffices to show that the local indices
coincide for $K$ and $K''$.
Deform the structures on neighborhoods of $K$ and $K''$
simultaneously to make the structures translationally
invariant near $K$ and $K''$ respectively.
Let $M_0,M_0'$ and $\hat{M}_0$ be
the following manifolds with cylindrical ends.
\begin{eqnarray*}
M_0&=&K'' \cup (N'' \times [0,\infty)) \\
M_0'&=&(N'' \times (-\infty, 0]) \cup (K\setminus K'')
 \cup (N \times [0,\infty)) \\
\hat{M_0}&=&K \cup (N \times [0,\infty))
\end{eqnarray*}
On the cylindrical ends we have 
translationally invariant Clifford modules
$W_0,W_0'$ and $\hat{W}_0$, and
translationally invariant acyclic generalized real polarizations.
On $M_0'$ the acyclic generalized real polarization is given
globally.
The sum formula of Lemma~\ref{sum formula} implies
$\Ind(M_0,W_0)+\Ind(M_0',W_0')=\Ind(\hat{M}_0,\hat{W}_0)$.
The vanishing of Lemma~\ref{vanishing for cylindrical case}
implies $\Ind(M_0',W_0')=0$.
Therefore we have $\Ind(M_0,W_0)=\Ind(\hat{M}_0,\hat{W}_0)$.
This is the required equality.
\end{proof}
\subsection{Excision}
The well-definedness shown in
Lemma~\ref{well-definedness of Ind} is
the key point for the following formulation
of excision property.
\begin{theorem}(Excision property)
\label{exicion}
Let $W$ be a $\Z_2$-graded Clifford module bundle over
$Cl(TM)$.
Let $V$ be an open subset of $M$ with
an acyclic generalized real polarization $(U,W_U,\pi,D_{\fiber})$
such that $M \setminus V$ is compact.
Suppose $U'$ is an open subset of $U$
such that 
$M':=V' \cup (M \setminus V)$ 
is an open neighborhood of $M \setminus V$, 
where we put $V':=\pi^{-1}(U')$. 
Note that
$V'$ has the restricted acyclic generalized real polarization.
Then we have
$$
\Ind(M,V,W)=\Ind(M',V',W|_{M'}). 
$$
\end{theorem}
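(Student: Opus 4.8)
The plan is to reduce everything to the well-definedness statement of Lemma~\ref{well-definedness of Ind}: since the local index $\Ind(M,V,W)$ is independent of the codimension-one submanifold $N_U\subset U$ used to cut $M$, it suffices to exhibit a single $N_U$ that is admissible both for $(M,V,W)$ and for $(M',V',W|_{M'})$, and then to check that the two resulting integers are literally the same.

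First I would check that $N_U$ can be chosen inside $U'$. Running the construction preceding Definition~\ref{definition for general case} with the distance function $f\colon M\to[0,\infty)$ from $M\setminus V$ and its fibrewise maximum $g\colon U\to[0,\infty)$, one is free to shrink the parameter $r>0$; since $M\setminus V=f^{-1}(0)$ is compact and $M'$ is an open neighbourhood of it, for $r$ small enough $f^{-1}([0,r])$ is compact and contained in $M'$. Then every $x\in U$ with $g(x)\le r$ has $\pi^{-1}(x)\subset f^{-1}([0,r])\subset M'$; as $\pi^{-1}(x)\subset V$ is disjoint from $M\setminus V$, in fact $\pi^{-1}(x)\subset V'$, hence $x\in U'$. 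Choosing $h$ with $|h-g|<r/2$ and a regular value $r_0\in(0,r/2)$ as in the construction, we get $N_U:=h^{-1}(r_0)\subset\{g<r\}\subset U'$, and likewise the compact part $K$ of $M$ cut off by $N:=\pi^{-1}(N_U)$ lies in $f^{-1}([0,r])\cup(M\setminus V)\subset M'$.

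Finally I would observe that this $N_U$ is admissible for $M'$ as well: $M'\setminus V'=M\setminus V$ is compact, $N_U\subset U'$, and since $h^{-1}([0,r_0))\subset\{g<r\}\subset U'$, the hypersurface $N$ divides $M'$ with compact part exactly $K$. By Lemma~\ref{well-definedness of Ind}, both $\Ind(M,V,W)$ and $\Ind(M',V',W|_{M'})$ may be computed from this common $N_U$ via Definition~\ref{definition for general case}. But the deformation to a translationally invariant structure near $K$, and the resulting cylindrical-end triple $M_0=K\cup(N\times[0,\infty))$ with its bundle $W_0$, Dirac-type operator, and translationally invariant acyclic generalized real polarization, depend only on the data in a neighbourhood of $K$; since $K\subset M'\subset M$ and the structures on $M'$ are by definition the restrictions of those on $M$, the same choices produce the same $(M_0,W_0)$ in both cases. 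Hence Definition~\ref{definition for cylindrical case} assigns to the two local indices the identical super-dimension of $L^2$-solutions, which is the desired equality. The only step that takes any real work is shrinking the compact core into $M'$, and that is a routine compactness argument; everything afterwards is bookkeeping made legitimate by Lemma~\ref{well-definedness of Ind}.
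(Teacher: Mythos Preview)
Your proposal is correct and follows essentially the same approach as the paper: choose a single codimension-one submanifold $N_U\subset U'$ that is admissible for both $(M,V,W)$ and $(M',V',W|_{M'})$, and invoke Lemma~\ref{well-definedness of Ind} so that both local indices are computed from identical cylindrical-end data. The only cosmetic difference is direction---the paper starts from an $N_{U'}$ constructed for $M'$ (which is automatically in $U'$) and notes it also serves for $M$, whereas you run the construction for $M$ and shrink $r$ to force $N_U\subset U'$; your extra compactness argument is correct but unnecessary if one starts from the $M'$ side.
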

\begin{proof}
Take a codimension-one submanifold $N_{U'}$ in $U'$
to define $\Ind(M',V',W|_{M'})$. Then $N_{U'}$ can be
used to define $\Ind(M,V,W)$.
\end{proof}

\begin{proof}
[Proof of Theorem~\ref{analytical}]
We first note that when $M$ is a closed manifold 
the local index $\Ind(M,V,W)$ defined 
by Definition~\ref{definition for general case} is 
equal to the usual index $\Ind(M,W)$ of a 
Dirac-type operator. 
Under the assumption of Theorem~\ref{analytical}
from the excision property we have
$\Ind(M, W)=\Ind(\cup_{i=1}^m V_i, W|_{\cup_{i=1}^m V_i})$,
which implies the theorem.
\end{proof}




\subsection{Proof of vanishing lemmas}

In this subsection we show
the vanishing lemmas Lemma~\ref{vanishing for closed case} 
and Lemma~\ref{vanishing for cylindrical case}. 
Suppose $V$ is an open subset of $M$
with an acyclic generalized real polarization
$(U, \pi, D_{\fiber})$. 
Take any order-one formally self-adjoint 
differential operator $\tilde{D}$ over $W|_V$ with degree one
whose principal symbol is given by the composition of 
the projection
$\pi_*: TV \to TU$ and the Clifford action of $TU$ on $W|_V$.
Then $\tilde{D} + D_{\fiber}$ is a Dirac-type operator
on $W|_V$.

\begin{lemma}
The anticommutator $D_{\fiber}\tilde{D}+\tilde{D}D_{\fiber}$
is an order-one differential operator on $W|_V$ which
contains only the derivatives along fibers, i.e,
it commutes with the multiplication of the pullback
of smooth functions on $U$.
\end{lemma}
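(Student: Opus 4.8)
The plan is to establish the two assertions—that $D_{\fiber}\tilde{D}+\tilde{D}D_{\fiber}$ has order $\le 1$, and that it contains only fiber derivatives—separately, each by a short symbol computation. Write $TV = T_{\fiber}V \oplus T_{\fiber}^{\perp}V$ for the orthogonal splitting and, dually, the induced splitting of $T^*V$; for $\xi \in T^*_x V$ let $\xi = \xi_{\fiber} + \xi_{\perp}$ be the corresponding decomposition and $\xi^{\sharp}$ the metric-dual vector, so that $\xi_{\fiber}^{\sharp}$ is vertical and $\xi_{\perp}^{\sharp}$ horizontal. Since $D_{\fiber}$ differentiates only along fibers, its principal symbol depends on $\xi$ only through $\xi_{\fiber}$, and by condition (c) in the definition of a generalized real polarization it equals the Clifford action $c(\xi_{\fiber}^{\sharp})$. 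Since the symbol of $\tilde{D}$ factors through $\pi_*:TV \to TU$, it depends only on $\xi_{\perp}$ and, under $T_{\fiber}^{\perp}V \cong \pi^*TU$, equals $c(\xi_{\perp}^{\sharp})$. The operator $D_{\fiber}\tilde{D}+\tilde{D}D_{\fiber}$ is a priori of order two, and its order-two part is the pointwise anticommutator of these two symbols, namely $c(\xi_{\fiber}^{\sharp})c(\xi_{\perp}^{\sharp}) + c(\xi_{\perp}^{\sharp})c(\xi_{\fiber}^{\sharp}) = -2\langle \xi_{\fiber}^{\sharp}, \xi_{\perp}^{\sharp}\rangle$, which vanishes because the two components are orthogonal. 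Hence the anticommutator has order $\le 1$.

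For the second assertion I use, as in the definition of a generalized real polarization, that "contains only the derivatives along fibers" means "commutes with multiplication by $\pi^* f$ for every $f \in C^\infty(U)$." Expanding via the Leibniz rule $[AB,C] = A[B,C] + [A,C]B$ and using $[D_{\fiber}, \pi^* f] = 0$ (condition (b) for $D_{\fiber}$), one gets $[D_{\fiber}\tilde{D}+\tilde{D}D_{\fiber}, \pi^* f] = D_{\fiber}[\tilde{D}, \pi^* f] + [\tilde{D}, \pi^* f]D_{\fiber}$. Now $[\tilde{D}, \pi^* f]$ is a zeroth-order operator: up to the usual scalar of the symbol calculus it is $c$ applied to $(\pi^* df)^{\sharp}$, and since $\pi^* df$ annihilates $T_{\fiber}V$ this vector lies in $T_{\fiber}^{\perp}V$, so $[\tilde{D}, \pi^* f]$ is the Clifford action of a horizontal vector field, i.e. of a section of $\pi^*TU$ under the horizontal lift. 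By condition (d) of a generalized real polarization the Clifford action of $TU$ anticommutes with $D_{\fiber}$, whence $D_{\fiber}[\tilde{D}, \pi^* f] + [\tilde{D}, \pi^* f]D_{\fiber} = 0$. Therefore $D_{\fiber}\tilde{D}+\tilde{D}D_{\fiber}$ commutes with every $\pi^* f$, which is the claim.

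The argument is essentially formal; the only points requiring care are the precise identification of the two principal symbols—that the symbol of $\tilde{D}$ is "purely horizontal" and that of $D_{\fiber}$ "purely vertical" under the metric identifications—and the verification that $[\tilde{D}, \pi^* f]$ is genuinely the Clifford action of a horizontal vector, so that condition (d) can be applied. Bookkeeping of signs and of the $\sqrt{-1}$'s in the symbol calculus (everything in sight being skew-Hermitian because $\tilde{D}$ and $D_{\fiber}$ are formally self-adjoint) is the only place a slip could occur, but it does not affect the structural conclusion.
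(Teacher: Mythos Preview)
Your argument is correct and is precisely the ``local description'' computation the paper alludes to but does not write out: the key point in both is that the principal symbol of $\tilde{D}$, being Clifford multiplication by a horizontal vector, anti-commutes with the \emph{entire} operator $D_{\fiber}$ (condition~(d)), not just with its symbol, and your commutator calculation with $\pi^*f$ makes this explicit. The paper additionally offers a heuristic reformulation in which $D_{\fiber}$ is viewed as an order-zero endomorphism of the infinite-dimensional bundle $\mathcal{W}\to U$ of fiberwise sections, so that the anticommutator becomes an order-one operator on $\mathcal{W}$ whose principal symbol (in the $U$-direction) vanishes by the same anti-commutation; this packaging is different in flavor but encodes the same mechanism you used.
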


\begin{proof}
Recall that,
the principal symbol of $\tilde{D}$ anti-commutes 
not only with the symbol of $D_{\fiber}$, but also 
with the whole operator $D_{\fiber}$.
The claim follows from this property.
It is straightforward to check it using local description.
Instead of giving the detail of the local calculation, however,
we here give an alternative formal explanation for the above lemma.

For $x \in U$ let ${\mathcal W}_x$ be
the space of sections of the restriction of $W$ to the fiber $\pi^{-1}(x)$.
Then ${\mathcal W}=\coprod {\mathcal W}_x$ is formally
an infinite dimensional vector bundle over $U$. 
We can regard $D_{\fiber}$ as an endmorphism on ${\mathcal W}$.
Then $D_{\fiber}$ is a order-zero differential operator
on ${\mathcal W}$ whose principal symbol is equal to 
$D_{\fiber}$ itself.
Then, as a differential operator on ${\mathcal W}$,
$D_{\fiber}\tilde{D}+\tilde{D}D_{\fiber}$ is
an (at most) order-one operator whose principal symbol
is given by the anticommutator between 
the Clifford action by $TU$
and $D_{\fiber}$. This principal symbol vanishes, which implies
that the anticommutator is order-zero as a differential operator
on ${\mathcal W}$, i.e., it does not contain derivatives of
$U$-direction.
\end{proof}

\begin{proof}
[Proof of Lemma~\ref{vanishing for closed case}]
Let $f$ be a section of $W$.
On each fiber of $\pi$ at $x\in U$, 
the second order
elliptic operator
$D_{\fiber}^2$ is strictly positive.
Since $D_{\fiber}\tilde{D}+\tilde{D}D_{\fiber}$
gives a first order operator on the fiber,
a priori estimate implies the estimate
$$
\left|
\int_{\pi^{-1}(x)} 
((D_{\fiber}\tilde{D}+\tilde{D}D_{\fiber})f,f)
\right|
\leq C \int_{\pi^{-1}(x)} (D_{\fiber}^2 f,f)
$$
for some positive constant $C$.
Since $M$ is compact we can take $C$ uniformly.
Therefore we have
\begin{eqnarray*}
\int_{M} ((\tilde{D} + t D_{\fiber})^2 f,f)
&=&
\int(\tilde{D}^2 f,f) +t^2 \int(D_{\fiber}^2 f,f) \\
&&
+t \int((D_{\fiber}\tilde{D}+\tilde{D}D_{\fiber})f,f)
 \\
&\geq&
\int(\tilde{D}^2 f,f) +
(t^2-Ct) \int(D_{\fiber}^2 f,f) \\
&=&
\int_M |\tilde{D} f|^2
+ (t^2 -Ct) \int_M |D_{\fiber} f|^2.
\end{eqnarray*}
In particular if $t>C$ and
$(\tilde{D} + t D_{\fiber})f=0$, then
$D_{\fiber}f$ is zero, which implies $f$
itself is zero.
\end{proof}

\begin{proof}
[Proof of Lemma~\ref{vanishing for cylindrical case}]
The proof is almost identical to the
above one for the compact case.
What we still need is to guarantee the validity
of partial integration.
This validity follows from the fact that
when $(\tilde{D} + t D_{\fiber})f=0$ and $f$
is in $L^2$, then
$f$ and any derivative of $f$ decay
exponentially on the cylindrical end of $V$.
\end{proof}
Note that the exponential decay in the above
proof relies on
the vanishing lemma for compact case, so
we had to separate the proof.

\section{2-dimensional case}

Let $\Sigma$ be a compact oriented surface with non-empty boundary
$\partial \Sigma$.
Let $L$ be a complex line bundle over $\Sigma$.
Suppose a $U(1)$-connection is given on the restriction of
$L$ to  $\partial \Sigma$.
When the connection is non-trivial on every boundary component,
we can define the local Riemann-Roch number for the data
as follows.
Fix a product structure $(-\epsilon, 0] \times \partial \Sigma$
on an open neighborhood of the boundary.
Then on the collar neighborhood of each connected component of $\partial \Sigma$ the projection onto 
$(-\epsilon, 0]$ is a circle bundle.
Extend the connection smoothly on the neighborhood of the boundary
so that we have a flat non-trivial connection on each fiber
of the circle bundle.
Let $V=V_1$ be the interior of $\Sigma$, and $V_{\infty}$ be the intersection
of the open collar neighborhood and $V$.
Then we have the local Riemann-Roch number $RR(V,L)$ (see Theorem~\ref{topological}). 
The deformation invariance of the local Riemann-Roch number implies
that it depends only on the initially given data.
We often write 
$$
[\Sigma]=RR(V,L).
$$
In this section we calculate $[\Sigma]$ explicitly for several examples
(Theorem~\ref{local RRs in 2 dim}). We also show that the local Riemann-Roch number
for a non-singular Bohr-Sommerfeld fiber in symplectic case is equal to one 
(Theorem~\ref{local RR of BS}). 
Let us first recall our convention of orientation for
boundary. 
We use the convention for which Stokes theorem holds with  positive sign.
In other words:
Suppose $\hat{X}$ is an oriented manifold, and
$f$ is a smooth real function on $\hat{X}$
with $0$ a regular value.
The orientations of $X=f^{-1}((-\infty,0])$ and $Y=f^{-1}(0)$
are related to each other as follows.
If $\omega_X$ and $\omega_Y$ are
non-vanishing top-degree forms on
$X$ and $Y$ compatible with
their orientations, then we have 
$
\omega_X |_Y= \rho\,df \wedge \omega_Y
$
for some positive smooth function $\rho$ on $Y$.

\subsection{Type of singularities}

\subsubsection{BS type singularities}


For a positive number $\epsilon$ let $A_\epsilon$ 
be the annulus
$[-\epsilon,\epsilon] \times S^1$
with the orientation given by 
$dx \wedge d\theta / 2 \pi$,where
$x$ is the coordinate of $[-\epsilon,\epsilon]$.
The projection map $(x,\theta) \mapsto x$ gives a circle bundle
structure of $A_\epsilon$.
Let $L$ be a complex line bundle over $A_\epsilon$, and
$\nabla$ a $U(1)$-connection on $L$.
Let $e^{\sqrt{-1} h(x)}$ be the
holonomy along the circle of $\{ x \} \times S^1$
for the orientation given by $d \theta$.
Suppose $h(x)$ is continuous and  $h(0)=0$.

\begin{definition}[positive/negative BS]
When $h(x)>0$ for  $x>0$ and
$h(x)<0$ for $x<0$, we call the fiber at $0$
a {\it  positive BS type}.
When $h(x)<0$ for  $x>0$ and
$h(x)>0$ for $x<0$, we call the fiber at $0$
a {\it  negative BS type}. See Figure~\ref{fig1}. 
\end{definition}
\begin{figure}[hbtp]
\begin{center}
\input{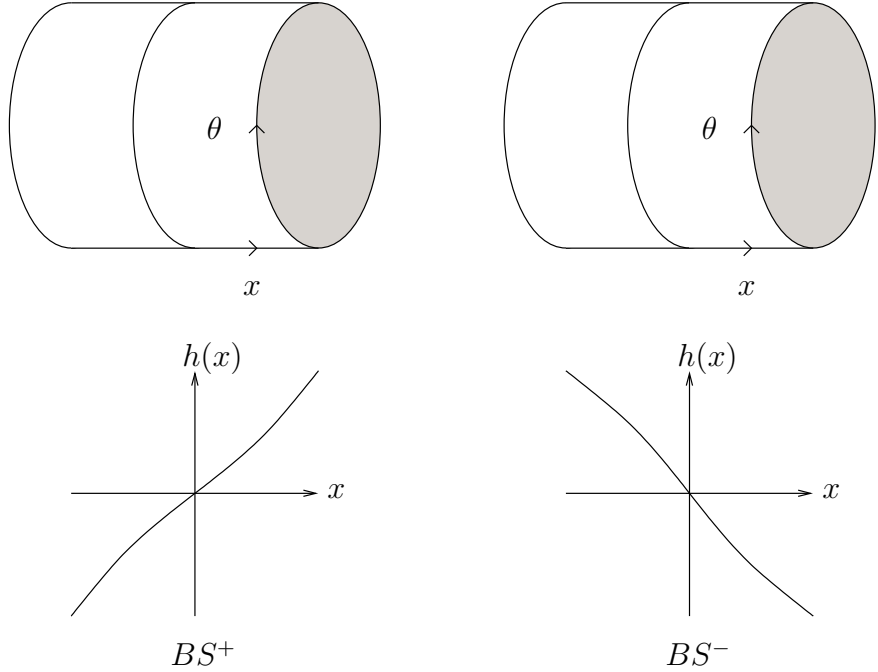}
\caption{positive/negative BS}
\label{fig1}
\end{center}
\end{figure}

\subsubsection{Disk type singularities}

Let $D^2$ be an oriented disk.
Choose a polar coordinate
$r$ and $\theta$ so that $D^2$ becomes a unit disk and
the orientation of $D^2$ is compatible with
$dr \wedge d\theta$ outside the origin.
In particular  the orientation of
the boundary $\partial D^2$ is compatible with
$d\theta$.
The projection map $(r,\theta) \mapsto r$ gives a circle bundle
structure
on neighborhood of the boundary of $D^2$.

Let $L$ be a complex line bundle over $D^2$, and
$\nabla$ a $U(1)$-connection on $L$.
Let $e^{\sqrt{-1} h(r)}$ be the
holonomy along the circle of radius $r$ centered in the origin.
The orientation of the circle is defined as the boundary
of the disk of radius $r$ centered in the origin.
We can take $h(r)$ continuous with limit value $\lim_{r \to 0}h(r)=0$.

\begin{definition}[positive/negative disk]
When $h(r)>0$ for small $r>0$ we call the singularity 
positive-disk type.
When $h(r)<0$, then we call it negative-disk type. See Figure~\ref{fig2}. 
\end{definition}
\begin{figure}[hbtp]
\begin{center}
\input{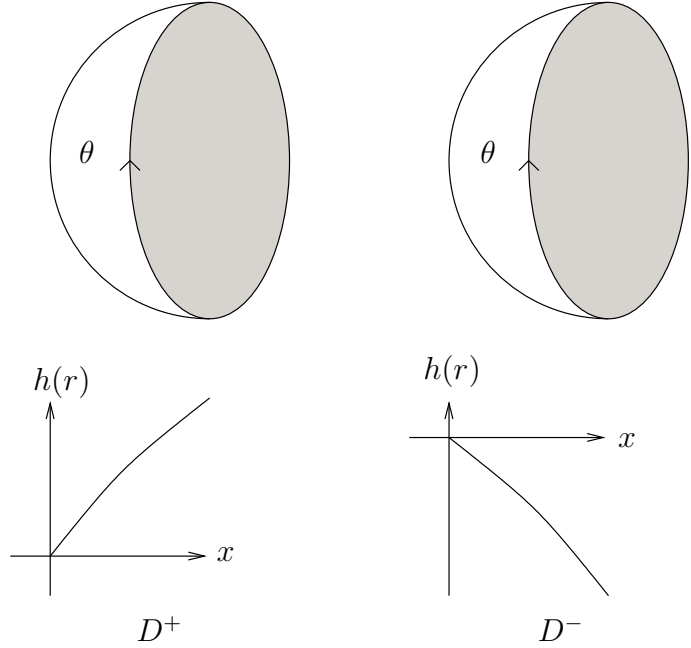}
\caption{positive/negative disk}
\label{fig2}
\end{center}
\end{figure}
\subsubsection{Pants type singularities}

Let $\Sigma$ be a genus $0$ oriented surface with three holes, i.e.,
$\Sigma$ is a pair of pants.
For each boundary component its collar is diffeomorphic to
the product of a circle and an interval. The projection
onto the interval gives a circle bundle structure near the boundary.

Let $L$ be a complex line bundle over $\Sigma$, and $\nabla$ a $U(1)$-connection on $L$. Suppose $\nabla$ is flat and its holonomy along 
each component of the boundary of $\Sigma$ is non-trivial.
Let
$e^{\sqrt{-1}h_k}$ $(k=1,2,3)$ be the three holonomies along
the three components of the boundary of $\Sigma$.
The orientations of the boundary components are defined
as the boundary of the oriented manifold $\Sigma$.
Then the product of these three holonomies is equal to $1$.
We can take $h_k$ satisfying $0 < h_k <2\pi$.

From our assumption, there is no Bohr-Sommerfeld fiber
in the collar neighborhood of the boundary.

\begin{definition}[small/large pants]
When $h_1+h_2+h_3=2 \pi$, we call $\Sigma$ a small pants.
When $h_1+h_2+h_3=4 \pi$, we call $\Sigma$ a large pants.
\end{definition}

\subsection{Examples}
\begin{example}[a torus over a circle with degree $n$ line bundle]
\label{A torus over a circle with degree $n$ line bundle}
Let $x$ and $y$ the coordinate of
${\R}^2$.
Let $\hat\nabla$ be the $U(1)$-connection on the trivial complex line
bundle over ${\R}^2$ with connection form $-\sqrt{-1} x dy$.
\begin{enumerate}
\item
The curvature $F_{\hat \nabla}$ of $\hat\nabla$ is $-\sqrt{-1} dx\wedge dy$.
In particular 
$$
\frac{\sqrt{-1}}{2\pi} F_{\hat\nabla} = \frac{1}{2\pi} dx \wedge dy
$$
\item
The holonomy along the straight line from the point $(x,0)$ to $(x,y)$
is equal to $\exp (\sqrt{-1} xy)$.
\item
The connection is invariant under the action of $(m,2\pi n)\in
{\Z} \oplus 2 \pi {\Z}$
given by
$$
(x,y,u) \mapsto (x+m, y+2 \pi n, e^{\sqrt{-1}my}u ).
$$
\end{enumerate}
For a positive integer $N$
let $T^2_N$ be the quotient of ${\R}^2$
divided by the subgroup $N{\Z} \oplus  2\pi {\Z}$,
on which we have the quotient complex line bundle $L_N$
with the quotient connection $\nabla_N$.
Then $(\sqrt{-1}/2 \pi) F_{\nabla_N}$ gives a symplectic structure
on $T^2_N$. The projection onto the first factor
$T^2_N \to {\R}/N\Z$ is a
Lagrangian fibration. 
The holonomy along the fiber at $x \bmod N$ is
$e^{2\pi \sqrt{-1} x}$.
The Bohr-Sommerfeld fibers
are the fibers at $x \in  {\Z}/ N\Z$, and
all of them are positive BS singuralities.
The degree of $L_N$ is equal to $N$ and
the Riemann-Roch number is $N$ from the Riemann-Roch theorem and
it is equal to the number of positive BS singularities.
\end{example}

\begin{example}[a sphere with degree-zero line bundle]
\label{Sphere with trivial line bundle}
Let $D^+$ and $D^-$ be disks, and $L^+$ and $L^-$ complex line bundles over $D^+$ and $D^-$ with $U(1)$-connections
such that the connections are flat near the boundaries, respectively.
Suppose $D^+$ and $D^-$  are a positive disk and a negative disk 
respectively, and
the holonomies along the boundaries are
$e^{\sqrt{-1}\epsilon}$ and $e^{-\sqrt{-1} \epsilon}$
for a small positive number $\epsilon$.
Patch $(D^+,L^+)$ and $(D^-,L^-)$ together to obtain
a complex line bundle over an oriented sphere with a $U(1)$-connection.
Since the degree of the $U(1)$-bundle is zero,
the Riemann-Roch number is $1$.
\end{example}

\begin{example}[a surface with a pants 
decomposition with a flat line bundle]
\label{Surface with a pants decomposition with flat line bundle}
Let $\epsilon$ be a small positive number.
Let $P^S$ be a pair of pants with a flat $U(1)$-bundle whose
holonomies along the boundary components are
$e^{\sqrt{-1}(\pi- \epsilon)}$, $e^{\sqrt{-1}(\pi- \epsilon)}$, and
$e^{\sqrt{-1} 2\epsilon}$. 
Let $P^L$ be a pair of pants with a flat $U(1)$-connection whose
holonomies along the boundary components are
$e^{\sqrt{-1} (\pi+\epsilon)}$, $e^{\sqrt{-1}(\pi+ \epsilon)}$, and
$e^{- \sqrt{-1} 2\epsilon}$. 
Then $P^S$ is a small pants and $P^L$ is a large pants.
For an integer $g \geq 2$,
take $(g-1)$ copies of $P^S$ and
$(g-1)$-copies of $P^L$, and patch them together
to obtain a flat connection on a closed oriented surface
with genus $g$.
The Riemann-Roch number is $1-g$.
\end{example}

\subsection{Local Riemann-Roch numbers}

Let $[BS^+]$ and $[BS^-]$
be the contribution of a positive and negative BS respectively.
Let $[D^+]$ and $[D^-]$be the contribution of a positive 
and negative disk respectively.
Let $[P^S]$ and $[P^L]$ be the contribution of a small 
and large pants respectively.
\begin{theorem}\label{local RRs in 2 dim}
$$
[BS^+]=1,\quad
[BS^-]=-1,\quad
[D^+]=1,\quad
[D^-]=0,\quad
[P^S]=0,\quad
[P^L]=-1
$$
\end{theorem}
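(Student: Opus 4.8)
The strategy is to compute each of the six local Riemann-Roch numbers by matching it against the globally computed Riemann-Roch numbers in Examples~\ref{A torus over a circle with degree $n$ line bundle}, \ref{Sphere with trivial line bundle}, and \ref{Surface with a pants decomposition with flat line bundle}, together with the sum formula of Theorem~\ref{topological} and a deformation/symmetry argument for the individual contributions. The key point is that each closed surface in those examples is assembled (up to the local-index deformation invariance) from the model pieces whose contributions we want, so the additivity $RR(M,E)=\sum_i RR(V_i,E)$ turns a handful of classical Riemann-Roch computations into linear equations for $[BS^+],[BS^-],[D^+],[D^-],[P^S],[P^L]$.

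First I would pin down $[BS^+]$ and $[BS^-]$. In Example~\ref{A torus over a circle with degree $n$ line bundle} the surface $T^2_N$ fibers over $\R/N\Z$ with exactly $N$ Bohr-Sommerfeld fibers, all positive BS type, and the rest of the base is $L$-acyclic; Theorem~\ref{symplectic}/\ref{topological} then gives $N=RR(T^2_N,L_N)=N\,[BS^+]$, hence $[BS^+]=1$. For $[BS^-]$ I would reverse the orientation of the base circle (equivalently conjugate the holonomy), which interchanges positive and negative BS fibers and negates the Riemann-Roch number, since reversing orientation of a surface negates the index of the Dolbeault operator; this forces $[BS^-]=-[BS^+]=-1$. (Alternatively one can realize a single cancelling $BS^+$/$BS^-$ pair on a cylinder with trivial holonomy at both ends, whose local index vanishes by the vanishing lemma, again giving $[BS^-]=-1$.)

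Next, $[D^+]$ and $[D^-]$. Example~\ref{Sphere with trivial line bundle} writes $S^2$ with a degree-zero line bundle as $D^+\cup D^-$ with cancelling small holonomies, and classical Riemann-Roch gives $RR(S^2,L)=1$; additivity yields $[D^+]+[D^-]=1$. To separate them I would note that a positive disk can be deformed, by pushing the singular value to the boundary, into a configuration where its boundary collar already carries the given non-trivial flat connection and the interior contributes nothing extra — more precisely, a positive disk is obtained from a trivial disk (no holonomy, Riemann-Roch $1$) by the deformation that creates one $BS^+$ circle inside it, so $[D^+]=1+[\text{trivial disk}]$? That bookkeeping is delicate, so instead I would argue directly: on the negative disk one can homotope the connection so that $h(r)<0$ for \emph{all} $r>0$, i.e. there is no Bohr--Sommerfeld fiber and no singular fiber in the open disk at all except the puncture, and then show the local index of such a punctured-disk polarization is $0$ (the $D_{\fiber}$-operator has no kernel anywhere and the APS index over the empty compact part vanishes), giving $[D^-]=0$ and hence $[D^+]=1$.

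Finally, $[P^S]$ and $[P^L]$. Here I would use Example~\ref{Surface with a pants decomposition with flat line bundle}: gluing $(g-1)$ small pants and $(g-1)$ large pants along the flat-connection collars produces a genus-$g$ surface with a flat line bundle, whose Riemann-Roch number is $1-g=\chi(\Sigma_g)/2$. Additivity gives $(g-1)[P^S]+(g-1)[P^L]=1-g$, i.e. $[P^S]+[P^L]=-1$, valid for all $g\ge 2$. To separate the two I would glue a small pants to a disk: capping one boundary circle of a small pants by a positive disk with matching holonomy $e^{\sqrt{-1}2\epsilon}$ produces an annulus whose two remaining boundary holonomies $e^{\sqrt{-1}(\pi-\epsilon)}$ can be deformed, through a path of non-trivial holonomies, to a fixed value — a cylinder with equal non-trivial holonomy at both ends has local index $0$ by the vanishing Lemma~\ref{vanishing for cylindrical case}. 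Thus $[P^S]+[D^+]=0$, so $[P^S]=-1$, and then $[P^L]=-1-[P^S]=0$. \textbf{The main obstacle} I anticipate is precisely this last separation step: one must check that the capping/deformation genuinely stays within the class of admissible acyclic generalized real polarizations (no Bohr--Sommerfeld fiber is created along the homotopy of boundary holonomies, and the resulting non-compact piece really is a translationally-invariant cylinder end to which the vanishing lemma applies), and similarly for the $[D^-]=0$ argument; getting the orientation conventions of Section~6 to line up with the sign of the index in the gluing formula is the delicate accounting that the rest of the proof hinges on.
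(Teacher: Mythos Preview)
Your overall strategy---using the global Riemann-Roch numbers of Examples~\ref{A torus over a circle with degree $n$ line bundle}--\ref{Surface with a pants decomposition with flat line bundle} together with additivity to produce linear relations among the six unknowns---is exactly the paper's approach, and your derivations of $[BS^+]=1$, $[D^+]+[D^-]=1$, and $[P^S]+[P^L]=-1$ reproduce Lemma~\ref{Relations from RR theorem}. The difficulties are in your two separation arguments, and one of them leads you to the wrong answer.

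First, the argument for $[D^-]=0$ does not go through. Even after homotoping so that $h(r)<0$ for all $r>0$, the circle-bundle polarization is defined only on the \emph{punctured} disk; the center $r=0$ is a singular fiber and must lie in the compact piece $M\setminus V$ in the definition of $[\Sigma]$ at the beginning of Section~6. The local index is therefore the APS index of a Dirac operator on a genuine (small) disk with a cylindrical end attached, not over ``the empty compact part'', and nothing you have said forces that integer to be zero.

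Second, and decisively, your pants separation has an orientation mistake and produces values contradicting the theorem. When you glue a disk to the boundary circle of $P^S$ carrying holonomy $e^{\sqrt{-1}2\epsilon}$, the two boundary orientations are opposite, so the disk must have boundary holonomy $e^{-\sqrt{-1}2\epsilon}$ in its \emph{own} orientation; compare the gluing in Example~\ref{Sphere with trivial line bundle}. That is a negative disk $D^-$, not $D^+$. Even with the correct cap the resulting annulus is not automatically of vanishing local index: read in a single $d\theta$-orientation its two end holonomies are $e^{\pm\sqrt{-1}(\pi-\epsilon)}$, and whether the interpolating $h$ is forced through $0\bmod 2\pi$ depends on the curvature contributed by the capping disk, which you have not controlled. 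In any case your conclusion $[P^S]=-1$, $[P^L]=0$ is the opposite of what is being proved.

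The paper avoids both pitfalls with one uniform device (Lemma~\ref{basic relations}): rather than capping with disks, glue a single $BS^+$ annulus onto one boundary component. This slides that boundary holonomy across the trivial value and converts $P^L$ into $P^S$, $D^-$ into $D^+$, and cancels a $BS^-$ against a $BS^+$, yielding
\[
[P^L]+[BS^+]=[P^S],\qquad [D^-]+[BS^+]=[D^+],\qquad [BS^-]+[BS^+]=0.
\]
Together with the three sum relations you already obtained, this $6\times 6$ linear system is determined and gives the stated values.
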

\begin{proof}
These are consequences of
Lemma~{\ref{basic relations}} and
Lemma~\ref{Relations from RR theorem}
below.
\end{proof}

\begin{lemma}\label{basic relations}
$[P^L]+[BS^+]=[P^S],\quad
[BS^-]+[BS^+]=0,\quad
[D^-]+[BS^+]=[D^+].$
\end{lemma}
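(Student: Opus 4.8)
The plan is to derive each of the three identities from the additivity of local Riemann-Roch numbers established in Theorem~\ref{topological} (equivalently, from the sum formula for local indices, Lemma~\ref{sum formula}), by exhibiting in each case two different real polarizations on a cylinder/collar whose gluings produce the \emph{same} surface-with-collar data, so that the associated local contributions must agree. The key point is that a local Riemann-Roch number $RR(V,L)$ depends only on the data restricted to $V$ together with the acyclic polarization on the collar $V_\infty$, and is deformation-invariant; hence whenever two local pieces can be glued to a common closed or cylindrical-end manifold in two ways, the totals coincide.

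First I would treat $[BS^-]+[BS^+]=0$. Consider the annulus $A_\epsilon=[-\epsilon,\epsilon]\times S^1$ fibred over $[-\epsilon,\epsilon]$ with a $U(1)$-connection whose holonomy function $h(x)$ vanishes only at $x=0$, is positive for $x>0$ and negative for $x<0$: this is a single positive BS fibre sitting in an otherwise $L$-acyclic collar, so by Theorem~\ref{topological} applied to $V=A_\epsilon$, $V_1$ a neighbourhood of the central fibre, $V_\infty$ the rest, one gets $RR(A_\epsilon,L)=[BS^+]$. Now reverse the orientation of the base interval (the map $x\mapsto -x$): the \emph{same} total space with the \emph{same} connection is now read as carrying a negative BS fibre, giving $RR=[BS^-]$; but an orientation-reversal of a single base coordinate, while keeping the surface orientation fixed via a compensating reflection in the fibre, is a continuous deformation of the defining data through acyclic polarizations away from $x=0$, plus a trivial relabelling, so the two numbers are equal — wait, more cleanly: glue the positive-BS annulus to a negative-BS annulus along a common acyclic collar to obtain a surface on which the polarization extends \emph{globally} with no Bohr-Sommerfeld fibre, whence (by the vanishing Lemma~\ref{vanishing for cylindrical case}/Lemma~\ref{vanishing for closed case}, exactly as in the proof of Lemma~\ref{well-definedness of Ind}) the total index is $0$, i.e.\ $[BS^+]+[BS^-]=0$.

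For $[D^-]+[BS^+]=[D^+]$ the idea is the same: take a positive disk $D^+$, whose holonomy $h(r)$ is positive for small $r>0$; inside its collar insert an annulus carrying one positive BS fibre, i.e.\ deform $h$ so that it additionally crosses zero once on the collar. The resulting disk, read from the outer boundary inward, decomposes (by disjointness of the $V_i$ and the sum formula) as a genuine positive-disk contribution plus a $[BS^+]$; but one may equally deform $h$ on the collar so that it crosses zero with the opposite sign pattern, realizing instead a $[D^-]$ plus a $[BS^+]$ pointing the other way — matching signs of holonomy derivatives then forces $[D^+] = [D^-]+[BS^+]$. Concretely I would glue $D^+$ to the \emph{reverse} of a $[BS^+]$-annulus along an acyclic collar and recognize the result as $D^-$-type data, so that $[D^+] = [D^-] + [BS^+]$ by Lemma~\ref{sum formula}. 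Similarly, $[P^L]+[BS^+]=[P^S]$: attach a positive-BS annulus to one boundary leg of a small pants $P^S$; crossing that one extra zero of the holonomy turns a boundary leg with holonomy $e^{\sqrt{-1}h_k}$ into one with $h_k$ shifted past $2\pi$, which (by the $h_1+h_2+h_3\in\{2\pi,4\pi\}$ dichotomy in the definition of small/large pants) converts $P^S$-type collar data into $P^L$-type collar data; the sum formula then gives $[P^S]=[P^L]+[BS^+]$.

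The main obstacle is bookkeeping the orientation and holonomy-sign conventions consistently: one must check that inserting a positive-BS annulus into a collar really does change the \emph{type} of the adjacent singularity in the asserted direction (small$\to$large, $D^+\to D^-$, etc.), using the orientation convention for boundaries fixed just before the statement (Stokes with positive sign) and the sign conventions in the definitions of positive/negative BS, disk and pants. Once the orientations are pinned down, each identity is an immediate application of the excision/sum formula (Theorem~\ref{exicion}, Lemma~\ref{sum formula}) together with the vanishing lemmas, exactly as in the proof of well-definedness of the local index; no analysis beyond what is already in Section~5 is needed.
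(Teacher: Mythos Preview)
Your overall strategy is exactly the paper's: glue a positive-BS annulus to one of the two pieces along an acyclic collar, recognise the result (after a deformation of the connection fixing the boundary data) as the other piece, and invoke the sum formula together with deformation invariance. The paper spells out only the pants case and declares the other two ``similar''.

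Where your write-up needs repair is precisely the orientation/holonomy bookkeeping you yourself flag at the end. In the pants case you attach $BS^+$ to $P^S$ and assert the result is $P^L$-type; carried through, that would yield $[P^S]+[BS^+]=[P^L]$, the opposite of what you then write down. The paper does it the other way: it glues the $BS^+$ annulus (both boundary holonomies $e^{2\sqrt{-1}\epsilon}$) to the leg of $P^L$ with holonomy $e^{-2\sqrt{-1}\epsilon}$, and then deforms the glued connection to a flat one isomorphic to $P^S$; this gives $[P^L]+[BS^+]=[P^S]$ directly. Your heuristic ``$h_k$ shifted past $2\pi$'' does not match the small/large-pants convention ($0<h_k<2\pi$ with sum $2\pi$ or $4\pi$) and is where the sign slips. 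Likewise, in the $[BS^+]+[BS^-]=0$ argument the glued annulus does carry two Bohr--Sommerfeld fibres (one of each sign); the correct statement is that the holonomy function can be \emph{deformed} rel boundary to one that never hits $1$, after which the polarization is globally acyclic and the vanishing lemma applies.

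So: right method, but the asserted directions in the pants and disk cases are not yet justified and, as written for the pants, point the wrong way. Fixing this is exactly the careful boundary-orientation check you defer; once you do it (gluing $BS^+$ to $P^L$, to $D^-$, and to $BS^-$ respectively), the argument is complete and coincides with the paper's.
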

\begin{proof}
The three relations are shown in a similar way.
We just show the first relation.
Let $\epsilon$ be a small positive number.
Let  $P^S$  and $P^L$ be 
the small pants and the large pants
in Example~\ref{Surface with a pants 
decomposition with flat line bundle}.
%
%
Let $A$ be an oriented annulus with $U(1)$-connection
such that the connection is flat near the boundary.
Suppose $A$ is positive-BS type and both the holonomies
of the two boundary components are $e^{\sqrt{-1} 2 \epsilon}$
for the orientation as boundary of $A$.
Patch $P^L$ and $A$ together along the boundary components
with holonomies $e^{- \sqrt{-1} 2\epsilon}$ and
$e^{\sqrt{-1} 2 \epsilon}$ to obtain an another pair of pants 
with a $U(1)$-connection. The glued $U(1)$-connection
can be deformed to a flat $U(1)$-connection isomorphic to
the one on $P^S$ without changing the connection near boundary
components.
\end{proof}

\begin{lemma}\label{Relations from RR theorem}
$[BS^+]=1,\quad
[P^S]+[P^L] =-1, \quad
[D^+]+[D^-] =1.$
\end{lemma}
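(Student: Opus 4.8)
The plan is to extract each of the three relations by matching the additivity of the local Riemann--Roch number against the global Riemann--Roch numbers already computed in Examples~\ref{A torus over a circle with degree $n$ line bundle}, \ref{Sphere with trivial line bundle} and \ref{Surface with a pants decomposition with flat line bundle}. The additivity is an instance of Theorem~\ref{topological} with $E=L$: each of those closed surfaces $M$ is assembled by gluing pieces along circles on which the flat connection has non-trivial holonomy, and a collar of such a circle is a circle bundle with non-trivial fiber holonomy, hence carries an acyclic real polarization. Taking $V_\infty$ to be the union of these collars and the $V_i$ to be neighborhoods of the pieces with the collars removed, Theorem~\ref{topological} gives $RR(M,L)=\sum_i RR(V_i,L)$, and by the excision property Theorem~\ref{exicion} each $RR(V_i,L)$ equals the local Riemann--Roch number attached to the corresponding piece.

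First I would prove $[BS^+]=1$ using Example~\ref{A torus over a circle with degree $n$ line bundle} with $N=1$. The Lagrangian fibration $T^2_1\to\R/\Z$ has exactly one Bohr--Sommerfeld fiber, of positive BS type, and off the integers the fiber holonomy $e^{2\pi\sqrt{-1}x}$ is non-trivial; hence with $V_1$ a small annular neighborhood of that fiber, $RR(T^2_1,L_1)=RR(V_1,L_1)=[BS^+]$. Since the example computes $RR(T^2_1,L_1)=1$, we get $[BS^+]=1$. (For general $N$ the identical argument gives $N[BS^+]=N$, all $N$ neighborhoods carrying the same local data.)

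Next I would treat $[D^+]+[D^-]=1$ and $[P^S]+[P^L]=-1$. In Example~\ref{Sphere with trivial line bundle} the sphere is $D^+\cup D^-$ glued along a single circle with non-trivial holonomy $e^{\pm\sqrt{-1}\epsilon}$, so additivity gives $RR(S^2,L)=[D^+]+[D^-]$; the example computes this to be $1$, hence $[D^+]+[D^-]=1$. In Example~\ref{Surface with a pants decomposition with flat line bundle} with $g=2$, the closed genus-$2$ surface is glued from one small pants and one large pants along three circles with non-trivial holonomy, so $RR(\Sigma_2,L)=[P^S]+[P^L]$; the example computes $RR=1-g=-1$, hence $[P^S]+[P^L]=-1$. (For general $g\geq2$, using deformation invariance to see that all copies of $P^S$, resp.\ $P^L$, contribute equally, one gets $(g-1)([P^S]+[P^L])=1-g$.)

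The only step that requires genuine care --- though it is routine given the machinery of Section~5 --- is the additivity itself. Concretely, one deforms each piece near its gluing circles to have a translationally invariant cylindrical end, as in the construction preceding Definition~\ref{definition for general case}, applies the sum formula of Lemma~\ref{sum formula} repeatedly together with the well-definedness Lemma~\ref{well-definedness of Ind}, and finally uses that for a \emph{closed} surface the local index coincides with the ordinary index of a Dirac-type operator --- that is, with the Riemann--Roch number evaluated in the examples --- exactly as in the proof of Theorem~\ref{analytical}. Once this is in place the three stated identities are pure arithmetic.
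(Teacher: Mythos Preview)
Your proof is correct and follows essentially the same approach as the paper's: each of the three relations is obtained by applying the additivity of the local Riemann--Roch number (Theorem~\ref{topological}) to the closed surfaces of Examples~\ref{A torus over a circle with degree $n$ line bundle}, \ref{Sphere with trivial line bundle}, and \ref{Surface with a pants decomposition with flat line bundle}, whose global Riemann--Roch numbers are already computed there. The paper's own proof is a single sentence citing these three examples; your version simply spells out the mechanism (collars as $V_\infty$, excision, the sum formula) that the paper leaves implicit.
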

\begin{proof}
The three relations are consequences of
Example~\ref{A torus over a circle with degree $n$ line bundle},
Example~\ref{Sphere with trivial line bundle}, and
Example~\ref{Surface with a pants decomposition with flat line bundle}
respectively.
\end{proof}

\begin{remark}
It is possible to show $[BS^+]=1$ directly without appealing the Riemann-Roch theorem in the following way. We put $M:=\R\times S^1$ and consider the Hermitian structure $(g, J)$ on it, which is defined by 
\[
\begin{split}
&g(a_1\partial_x+b_1\partial_\theta,a_2\partial_x+b_2\partial_\theta)=a_1a_2+b_1b_2,\\
&J(\partial_x)=\partial_\theta , J(\partial_\theta)=-\partial_x. 
\end{split}
\]

Let $T_{\fiber}M$ be the tangent bundle along fibers of the first projection. Then, as complex vector bundles, $T_{\fiber}M\otimes_{\R} \C$ is identified with $(TM,J)$ by 
\begin{equation}\label{identification}
T_{\fiber}M\otimes_{\R} \C \to (TM,J),\ \partial_\theta \otimes_{\R} (x+\sqrt{-1}y)\mapsto \ x\partial_\theta +yJ\partial_\theta . 
\end{equation}

Let $L=M\times \C$ be the trivial complex line bundle on $M$. For $0<\epsilon< 1$ let $\rho(x)$ be a smooth increasing function on $\R$ with $\rho(0)=0$, $\rho(x)\equiv \epsilon$ for sufficiently large $x$ and $\rho(x)\equiv -\epsilon$ for sufficiently small $x$. Consider the $U(1)$-connection on $L$ of the form $\nabla =d-\sqrt{-1}\rho(x)d\theta$. 

Let $W:=\wedge^{\bullet}(TM,J)\otimes_{\C} L$ be the $\Z/2$-graded $Cl(TM)$-module bundle over $M$ whose $Cl(TM)$-module structure is defined by \cite[pp.38, (5.25)]{LawsonMichelsohn}. We take a Dirac-type operator $D$ acting on $\Gamma(W)$ to be the Dolbeault operator. Under the identification~\eqref{identification} we also take $D_{\fiber}$ to be the family of de Rham operators along fibers. It is easy to check that $D_{\fiber}$ satisfies the fourth property in Definition~4.3. Then the deformed operator $D_t=D+tD_{\fiber}$ is written in the following way
\begin{align}
D_ts=&\partial_x\otimes \left( \partial_xs_0+\sqrt{-1}(1+t)\partial_\theta s_0+(1+t)\rho(x)s_0\right) \nonumber \\
&-\left( \partial_xs_1-\sqrt{-1}(1+t)\partial_\theta s_1-(1+t)\rho(x)s_1\right) \nonumber
\end{align}
for $s=s_0+\partial_x\otimes s_1\in \Gamma (W)$, where $s_0\in \Gamma (L)$ and $\partial_x\otimes s_1\in \Gamma ((TM,J)\otimes_{\C}L)$ are even and odd parts of $s$, respectively. 

For an $L^2$-section $s_0$ of $L$, we first solve the equation 
\begin{equation}\label{plus}
0=\partial_xs_0+\sqrt{-1}(1+t)\partial_\theta s_0+(1+t)\rho(x)s_0. 
\end{equation}
By taking the Fourier expansion of $s_0$ with respect to $\theta$, $s_0$ is written as 
\begin{equation*}\label{Fourier}
s_0=\sum_{n\in \Z}a_n(x)e^{\sqrt{-1}n\theta}.  
\end{equation*}
Then, $s_0$ satisfies \eqref{plus} if and only if each $a_n$ is of the form 
\[
a_n(x)=c_n\exp \left( (1+t)\int_0^xn-\rho(x)dx\right)
\]
for some constant $c_n$. Since $\rho(x)\equiv \pm \epsilon$ for sufficiently large, or small $x$ and since $s$ is a $L^2$-section, it is easy to see that $c_n=0$ except for $n=0$. This implies that the kernel of the even part of $D_t$ is one-dimensional. 

Next we solve the equation
\begin{equation*}\label{minus}
0=\partial_xs_1-\sqrt{-1}(1+t)\partial_\theta s_1-(1+t)\rho(x)s_1. 
\end{equation*}
By the similar argument we can show that $c_n=0$ for all $n\in \Z$. This implies that the kernel of the odd part of $D_t$ is zero-dimensional. 
Thus, we have $[BS^+]=1$. 

By similar arguments we can also show that $[BS^-]=-1$, $[D^+]=1$, and $[D^-]=0$. 
\end{remark}


\subsection{Higher dimensional Bohr-Sommerfeld fibers}
We show the following.   
 
\begin{theorem}\label{local RR of BS}
In symplectic formulation, the local Riemann-Roch number
of a non-singular connected Bohr-Sommerfeld fiber is one.
\end{theorem}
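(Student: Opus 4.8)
The plan is to reduce the statement to a local model computation via the excision property (Theorem~\ref{exicion}) and then identify the local Riemann-Roch number with the super-dimension of the kernel of a deformed Dirac operator, which I expect to be one-dimensional. Concretely, let $F \cong T^n$ be a non-singular connected Bohr-Sommerfeld fiber of $\pi: M \to X$ at $x_0 \in X$. Since $x_0$ is a regular value, a neighborhood $\pi^{-1}(U_0)$ of $F$ is, up to symplectomorphism, the standard model $T^*T^n / (\text{period lattice})$ near the zero section, with $L$ restricting to a line bundle whose holonomy along the fibers is trivial exactly over $x_0$. By excision and deformation invariance, $RR(V_1, L)$ for a small $V_1 = \pi^{-1}(U_1)$ containing $F$ equals the local index of this standard model, so I may compute everything in the model.

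First I would set up the model explicitly, as in Example~\ref{A torus over a circle with degree $n$ line bundle} but in dimension $n$: take coordinates $(x, \theta) \in \R^n \times (\R/2\pi\Z)^n$ near the zero section, with the prequantum connection $\nabla = d - \sqrt{-1}\sum_j x_j\, d\theta_j$ on the trivial line bundle, so that the holonomy along the fiber at $x$ is $e^{2\pi\sqrt{-1} x_j}$ in each circle direction and the Bohr-Sommerfeld locus is $x \in \Z^n$. Passing to the topological/analytical formulation (Theorem~\ref{analytical}), $W = \wedge^\bullet T_{\fiber}V \otimes L$ with the Dolbeault operator $D$, and $D_{\fiber} = d_{L,\fiber} + d_{L,\fiber}^*$ the twisted de Rham operator along fibers; this is the setup already used in the \emph{Remark} following Theorem~\ref{local RRs in 2 dim}, which handles the $n=1$ case by explicit Fourier analysis. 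The $n$-dimensional case factorizes: the deformed operator $D_t = \tilde{D}_U + t D_{\fiber}$ on the model decomposes, after Fourier expansion in $\theta = (\theta_1, \dots, \theta_n)$, as a direct sum over Fourier modes $\mathbf{n} \in \Z^n$ of finite-rank systems of first-order ODEs in the $x$-variables, each an $n$-fold tensor product of the $1$-dimensional model.

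The key step is the kernel computation: for each Fourier mode $\mathbf{n}$ the relevant first-order operator in the $x_j$-direction has an $L^2$-solution precisely when the exponential weight $\exp\big((1+t)\int (n_j - \rho_j(x_j))\,dx_j\big)$ decays at both ends, and — exactly as in the $n=1$ remark — the boundary behavior of the holonomy (here a positive-BS-type profile in each direction, since $F$ is a genuine Bohr-Sommerfeld fiber that is non-singular) forces this only for $\mathbf{n} = 0$, contributing a single even solution and no odd ones. Hence $\Ker D_t$ is one-dimensional and concentrated in even degree, so the local index is $+1$. Invoking the deformation/excision machinery of Section~5 to identify this with $RR(V_1, L)$ completes the proof; in the $n=1$ case this is precisely $[BS^+] = 1$ from Theorem~\ref{local RRs in 2 dim}.

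The main obstacle I anticipate is making the reduction to the standard local model fully rigorous: one must check that the action-angle normal form near a non-singular Bohr-Sommerfeld fiber carries not just the symplectic form but also the prequantum line bundle with connection (up to the deformations allowed by Theorem~\ref{topological}) into the model above, and that the almost complex structure $J$ and induced spin$^c$ structure match up to homotopy through the structures for which the local index is defined. Once the normal form is in place, the ODE analysis is a routine $n$-fold product of the one-dimensional computation already carried out in the excerpt.
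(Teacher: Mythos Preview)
Your outline is correct, but the route differs from the paper's. Both arguments begin the same way: by the action--angle normal form, a neighborhood of any non-singular connected Bohr--Sommerfeld fiber is isomorphic (as a prequantized Lagrangian fibration) to a fixed standard model, so it suffices to compute the local Riemann--Roch number once. From here you proceed \emph{analytically}, carrying out the $n$-variable analogue of the Fourier/ODE computation in the Remark after Theorem~\ref{local RRs in 2 dim} to exhibit a one-dimensional even $L^2$-kernel of $D_t$ on the product model. The paper instead proceeds \emph{globally}: it observes that the product of $n$ copies of the torus $T^2_1$ from Example~\ref{A torus over a circle with degree $n$ line bundle} is a \emph{closed} symplectic manifold whose Lagrangian fibration has exactly one Bohr--Sommerfeld fiber, and whose Riemann--Roch number is $RR(T^2_1)^n=1$ by multiplicativity and the classical Riemann--Roch theorem; Theorem~\ref{symplectic} then forces the single local contribution to equal $1$. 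The paper's argument is shorter and avoids any operator analysis in dimension $>2$, at the cost of invoking the Riemann--Roch theorem for the model torus; your approach is more self-contained and makes the $n$-fold product structure of the kernel explicit, but you should be careful that the deformed operator $D_t$ on the product genuinely decomposes as a graded tensor product (equivalently, that the index is multiplicative for this cylindrical-end product), which is the step doing the real work in your version.
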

\begin{proof}
It is known that the neighborhoods of two Bohr-Sommerfeld fibers are
isomorphic to each other together with prequantizing line bundle
with connection: Recall that 
the fibers in a neighborhood of a Bohr-Sommerfeld fiber
are parameterized by their  periods. If we fix a local Lagrangian section,
and a trivialization of the first homology group of the local fibers,
then we can write down a canonical coordinate.
Therefore it suffices to give one example for which
the claim is satisfied.
An example of a Lagrangian fibration with 
exactly one $n$-dimensional Bohr-Sommerfeld fiber is given
by the product of $n$-copies of the fiber bundle structure of
the torus $T^2_N$ for $N=1$
in 
Example~\ref{A torus over a circle with degree $n$ line bundle}.
In this case 
our convention of the orientation for the symplectic manifold 
coincides with the product orientation. The Riemann-Roch number
is equal to one because it is equal to 
the $n$-th power of $RR(T^2_1)=1$. 
\end{proof}

As a corollary we have the following, which is already 
shown by J.~E.~Andersen by using index theorem.

\begin{corollary}[\cite{A}]
For a Lagrangian fibration without singular fibers
over a closed symplectic manifold with a prequantizing line bundle, 
the number of Bohr-Sommerfeld fibers is equal to the Riemann-Roch number. 
\end{corollary}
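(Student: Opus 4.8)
The plan is to obtain the corollary as an immediate consequence of the localization theorem (Theorem~\ref{symplectic}) together with the evaluation of the local Riemann-Roch number of a single nonsingular Bohr-Sommerfeld fiber (Theorem~\ref{local RR of BS}); no further analysis is required.

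First I would identify the non $L$-acyclic fibers and check that there are only finitely many of them. Since the Lagrangian fibration $\pi: M\to X$ has no singular fibers, every point of the compact image $\pi(M)$ is a regular value, so a point $x$ fails to be $L$-acyclic exactly when the flat bundle $L|_{\pi^{-1}(x)}$ admits a non-vanishing parallel section, i.e.\ when $\pi^{-1}(x)$ is Bohr-Sommerfeld. In action-angle coordinates on the base the holonomy of $\nabla$ along the fiber cycles varies with $x$ through a local diffeomorphism, so the Bohr-Sommerfeld locus is discrete; by compactness of $\pi(M)$ it is a finite set $\{x_1,\dots,x_m\}$.

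Next I would build the open covering required by Theorem~\ref{symplectic}. Take pairwise disjoint small balls $U_i\ni x_i$, each containing no other Bohr-Sommerfeld point, and let $U_\infty$ be the complement of slightly smaller concentric closed balls; then $U_\infty$ consists entirely of $L$-acyclic points, so $X=U_\infty\cup(\cup_{i=1}^m U_i)$ satisfies the hypotheses of Theorem~\ref{symplectic}, and we get
$$
RR(M,L)=\sum_{i=1}^m RR(V_i,L),\qquad V_i=\pi^{-1}(U_i).
$$
After shrinking each $U_i$ so that $V_i$ is a neighbourhood of the single fiber over $x_i$ (and, if that fiber is disconnected, a disjoint union of neighbourhoods of its connected Bohr-Sommerfeld tori, over which the local index is additive by the excision property of Theorem~\ref{exicion}), Theorem~\ref{local RR of BS} gives the contribution $1$ for each such torus. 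Summing, $RR(M,L)$ equals the number of Bohr-Sommerfeld fibers.

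I do not anticipate a real obstacle here: the substance is entirely in Theorems~\ref{symplectic} and~\ref{local RR of BS}, which may be assumed, and the corollary reduces to choosing the covering and invoking compactness. The only mild points needing care are the discreteness and finiteness of the Bohr-Sommerfeld locus and the check that the shrunk $U_\infty$ genuinely lies in the $L$-acyclic set, both of which follow at once from the absence of singular fibers.
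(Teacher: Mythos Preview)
Your proposal is correct and matches the paper's intended derivation: the paper states the result as an immediate corollary of Theorem~\ref{symplectic} and Theorem~\ref{local RR of BS} without writing out a proof, and you have supplied exactly the straightforward argument that is implicit there (choosing a covering isolating each Bohr-Sommerfeld fiber and summing the local contributions, each equal to $1$). Your added care about finiteness of the Bohr-Sommerfeld locus and about possibly disconnected fibers is appropriate and fills in details the paper leaves tacit.
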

\begin{remark}
It would be expected that,
if we use appropriate boundary condition, then
it would be possible to define a local Riemann-Roch number
for the product $D^+ \times P^L$, and moreover
it would be equal to the product  $[D^+][P^L]$, i.e., $-1$.
A crucial problem here is that 
there is no Lagrangian fibration structure
on the whole neighborhood of the boundary
of $D^+ \times P^L$. 
In fact it is possible to extend our formulation to
such cases. We will discuss this elsewhere \cite{FFY}.
\end{remark}

\section{Comments}
It is possible to extend our construction
for various situations.
\begin{enumerate}
\item
Isotropic fibrations:
When we have a integrable system which is
not necessary complete,
if all the orbits are \lq\lq periodic" and form tori,
we can  extend our argument.
It would be an interesting problem
to investigate the case when the orbits are not
periodic.
\item
Manifolds with boundaries and corners:
Our definition of the local index
is related to manifolds with boundaries.
For manifolds with coners, it is possible
to extend our construction.
\item
Equivariant and family version:
Our constructin is natural, so if a compact
Lie group acts and preserves the data,
then everything is formulated equivariantly.
Similarly we have a family version of our
construction.
\item
Equivariant mod-2 indices:
A modification of the localization property
explained in this paper can be applied to
define {\it $G$-equivariant mod-2 indices}
valued in $R(G)/RO(G)$ or $R(G)/RSp(G)$
for even dimensional 
$G$-spin$^{c}$-manifolds with 
$G$-spin structures on its end
\cite{FK}.
\end{enumerate}
We will discuss them elsewhere \cite{FFY,FFY3}.

\appendix

\section{Spin${}^c$-structures}\label{spin^c-structures}

In this appendix we recall our convention for 
spin$^{c}$-strctures on oriented manifolds. 
(See \cite[pp.54]{index 1} and \cite[pp.131-132]{LawsonMichelsohn}.)
A spin$^c$ structure is usually defined for 
an oriented Riemannian manifold. 
In this paper we take a convention of 
spin$^c$ structures which 
do not need any Riemannian metrics. 
In fact a spin$^c$ structure itself is defined 
at the principal bundle level as follows. 
Let $GL^{+}_{m}({\R})$ be the group of orientation preserving linear automorphisms of $\R^m$. Since $GL^{+}_{m}({\R})$ has the same homotopy type 
as that of $SO_{m}(\R)$, there is a unique non-trivial 
double covering of $GL^{+}_{m}({\R})$ when $m>1$.
We denote it by 
$p:\tilde{GL^{+}_{m}}({\R})\to GL^{+}_{m}({\R})$. 
When $m=1$, we define 
$\tilde{GL^{+}_{1}}({\R}):=GL^{+}_{1}({\R}) \times (\Z/2\Z)$
and $p$ to be the projection onto the first factor.

Consider the diagonal action of $\Z/2\Z$ on 
$\widetilde{GL^{+}_{m}}({\R})\times\C^{\times}$, where 
the action on the first factor is the deck transformation 
and the action on the second factor is defined by $z\mapsto -z$. 
Let $\widetilde{GL^{+}_{m}}({\R})\times_{\Z/2\Z}\C^{\times}$ be 
the quotient group by this diagonal action. 
Note that there is a canonical homomorphism 
$\hat p:\widetilde{GL^{+}_{m}}({\R})\times_{\Z/2\Z}\C^{\times} 
\to GL^{+}_{m}({\R})$ 
defined by 
$$
\hat p:[\tilde g,z]\mapsto p(\tilde g). 
$$

\begin{definition}
Let $M$ be an oriented manifold and 
$P_M$ the associated frame bundle over $M$, which is a 
principal $GL^{+}_{m}({\R})$-bundle. 
A {\it spin}$^c$-{\it structure} on $M$ is a 
pair $(\tilde P_M,q_M)$ satisfying the following two conditions. 
\begin{enumerate}
\item $\tilde P_M$ is a principal 
$\widetilde{GL^{+}_{m}}({\R})\times_{\Z/2\Z}\C^{\times}$-bundle
over $M$. 
\item $q_M$ is a bundle map from $\tilde P_M$ to $P_M$ 
which is equivariant with respect to the canonical 
homomorphism $\hat p$. 
\end{enumerate}
\end{definition}

\begin{remark}
Though a spin$^c$ structure can be defined 
without any Riemannian metric, 
we need to fix a Riemannian metric to define a Clifford module bundle 
over an oriented manifold. 
\end{remark}

\begin{remark}
The natural embedding $GL_n(\C)\hookrightarrow GL_{2n}^+(\R)$ 
induces a homomorphism 
$GL_n(\C)\to \tilde{GL^{+}_{2n}}({\R})\times_{\Z/2}\C^{\times}$. 
This means that an almost complex manifold has a 
canonical spin$^c$ structure in our convention. \
\end{remark}

\section*{Acknowledgements}
The author would like to thank 
Kiyonori Gomi for stimulating discussions.

\par\noindent{\scshape 
\small 
Hajime FUJITA
\\ Department of Mathematics,
Gakushuin University, \\ 1-5-1 Mejiro, Toshima-ku, Tokyo
171-8588, Japan.}
\par\noindent{\ttfamily hajime@math.gakushuin.ac.jp}

\vspace*{2mm}
\par\noindent{\scshape 
\small
Mikio FURUTA
\\ Graduate School of Mathematical Sciences,
The University of Tokyo, \\3-8-1 Komaba,
Meguro-ku, Tokyo 153-9814, Japan.}
\par\noindent{\ttfamily furuta@ms.u-tokyo.ac.jp}

\vspace*{2mm}
\par\noindent{\scshape 
\small 
Takahiko YOSHIDA
\\ Department of Mathematics, Graduate School of Science and Technology, Meiji University, \\1-1-1 Higashimita, 
Tama-ku, Kawasaki 214-8571, Japan.}
\par\noindent{\ttfamily takahiko@math.meiji.ac.jp}
\end{document}